\newtheorem{theorem}{Theorem}[section]
\newtheorem*{theorem*}{Theorem}
\newtheorem{proposition}[theorem]{Proposition}
\newtheorem{corollary}[theorem]{Corollary}
\newtheorem{lemma}[theorem]{Lemma}
\begin{document}

\title[Weighted geometric inequalities]{Weighted geometric inequalities for hypersurfaces in sub-static manifolds}

\author[F. Gir\~{a}o]{Frederico Gir\~{a}o}

\author[D. Rodrigues]{Diego Rodrigues}

\address{Universidade Federal do Cear\'{a}\\Departamento de Matem\'{a}tica\\Campus do Pici\\Av. Humberto Monte, s/n, Bloco 914, 60455-760\\Fortaleza/Ce\\Brazil} 
\email{fred@mat.ufc.br}

\address{Instituto Federal do Cear\'a\\
Av. Jos\'e de Freitas Queir\'os, 5000\\
63902-580\\
Quixad\'a/CE\\
Brazil}
\email{diego.sousa.ismart@gmail.com}

\subjclass[2010]{{51M16}, {53C42}, {53C44}}
\keywords{Inverse mean curvature flow; Sub-static manifolds; Alexandrov-Fenchel inequalities.} 
\thanks{Frederico Gir\~ao was partially supported by CNPq, grant number 306196/2016-6 and by FUNCAP/CNPq/PRONEX, grant number 00068.01.00/15. Diego Rodrigues was partially supported by a doctoral scholarship from CAPES}

\begin{abstract} 
We prove two weighted geometric inequalities that hold for strictly mean convex and star-shaped hypersurfaces in Euclidean space. The first one involves the weighted area and the area of the hypersurface and also the volume of the region enclosed by the hypersurface. The second one involves the total weighted mean curvature and the area of the hypersurface. Versions of the first inequality for the sphere and for the adS-Reissner-Nordstr\"om manifold are proven. We end with an example of a convex surface for which the ratio between the polar moment of inertia and the square of the area is less than that of the round sphere.
\end{abstract}

\maketitle

\section{Introduction}

Let $\Sigma$ be a closed orientable hypersurface embedded in $\mathbb{R}^n$ and assume that $\Sigma$ is strictly mean convex, that is, its mean curvature $H = (n-1)\sigma_1$ is positive. Let $\Omega$ be the region enclosed by $\Sigma$ and let $r$ be the distance to a fixed point $O$, which we refer to as the origin of $\mathbb{R}^n$. It is known that
\begin{equation} \label{kwong's inequality}
\int_\Sigma r\,d\Sigma \geq n \mathrm{Vol}(\Omega)    
\end{equation}
and
\begin{equation} \label{Kwong-Miao's inequality}
\int_\Sigma r^2\sigma_1 \, d\Sigma \geq n \mathrm{Vol}(\Omega).    
\end{equation}
Moreover, for any of the above inequalities, the equality holds if and only if $\Sigma$ is a sphere centered at the origin. Inequalities (\ref{kwong's inequality}) and (\ref{Kwong-Miao's inequality}) follow from the array of inequalities proved by Kwong in \cite{Kwong1} (Corollary 4.3).

Let $p \in \mathbb{R}^n$. A hypersurface $\Sigma$ in $\mathbb{R}^n$ is said to be {\em star-shaped with respect to $p$} if $\Sigma$ is the graph of some function defined on some geodesic sphere centered at $p$. We say that $\Sigma$ is {\em star-shaped} if there exists $p \in \mathbb{R}^n$ for which $\Sigma$ is star-shaped with respect to $p$. When the ambient is the hyperbolic space $\mathbb{H}^n$, these concepts are defined in a similar way.

When $\Sigma$ is star-shaped and strictly mean convex, inequality (\ref{Kwong-Miao's inequality}) was also proved by Kwong and Miao in \cite{kwong-miao}, using the inverse mean curvature flow (IMCF). 

Recall the isoperimetric inequality, which states that
$$
\omega_{n-1} \left( \frac{|\Sigma|}{\omega_{n-1}} \right)^{\frac{n}{n-1}} \geq n \mathrm{Vol}(\Omega),
$$
where $|\Sigma|$ denotes the area of $\Sigma$ and $\omega_{n-1}$ is the area of unit sphere $\mathbb{S}^{n-1}$.
Thus, it is natural to ask if (\ref{kwong's inequality}) and (\ref{Kwong-Miao's inequality}) can, respectively, be improved to
\begin{equation} \label{improved kwong}
    \int_\Sigma r\,d\Sigma \geq \omega_{n-1} \left( \frac{|\Sigma|}{\omega_{n-1}} \right)^{\frac{n}{n-1}}
\end{equation}
and
\begin{equation} \label{improved kwong-miao}
    \int_\Sigma r^2\sigma_1 \, d\Sigma \geq \omega_{n-1} \left( \frac{|\Sigma|}{\omega_{n-1}} \right)^{\frac{n}{n-1}}.
\end{equation}


We start by discussing inequality (\ref{improved kwong}), leaving inequality (\ref{improved kwong-miao}) for later.



Notice that, by Holder's inequality, if (\ref{improved kwong}) is holds, then
\begin{equation} \label{pmi inequality}
    \int_\Sigma r^2\,d\Sigma \geq \omega_{n-1}\left(\frac{|\Sigma|}{\omega_{n-1}}\right)^{\frac{n+1}{n-1}}
\end{equation}
also holds.

When $n=3$ we were able to construct a star-shaped and strictly convex hypersurface $\Sigma$ for which (\ref{pmi inequality}) doesn't hold
(see Section \ref{section_counterexample}). Obviously, for such surface, (\ref{improved kwong}) doesn't hold either.

Even though inequality (\ref{improved kwong}) isn't true (at least in dimension $n=3$), we were able to show the following result which, by the isoperimetric inequality, improves inequality (\ref{kwong's inequality}).

\begin{theorem}\label{ineq_rn} If $\Sigma$ is a smooth, star-shaped and strictly mean convex hypersurface in $\mathbb{R}^n$, then 
\begin{equation}\label{rn}
\int_\Sigma   r\,d\Sigma\geq \frac{n-1}{n}\omega_{n-1}\left(\frac{|\Sigma|}{\omega_{n-1}}\right)^{\frac{n}{n-1}}+\mbox{\rm Vol}(\Omega).
\end{equation}
Furthermore, the equality holds if and only if $\Sigma$ is a sphere centered at the origin.
\end{theorem}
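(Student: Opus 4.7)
The natural approach is to evolve $\Sigma$ by the inverse mean curvature flow (IMCF). Under the hypotheses of the theorem, the Gerhardt--Urbas theorem produces a smooth, globally defined family $\{\Sigma_t\}_{t\geq 0}$ which stays star-shaped and strictly mean convex, satisfies $|\Sigma_t|=e^t|\Sigma|$, and becomes asymptotically round in the sense that $e^{-t/(n-1)}\Sigma_t$ converges smoothly to a round sphere. My plan is to introduce the scale-invariant quantity
\[
G(t) := |\Sigma_t|^{-n/(n-1)}\left(\int_{\Sigma_t} r\,d\Sigma_t - \mathrm{Vol}(\Omega_t)\right),
\]
show that it is non-increasing along the flow, and verify that $\lim_{t\to\infty} G(t) = \frac{n-1}{n}\omega_{n-1}^{-1/(n-1)}$. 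Since this limiting value is exactly what $G$ takes on any sphere centered at the origin, monotonicity then yields $G(0) \geq \frac{n-1}{n}\omega_{n-1}^{-1/(n-1)}$, which rearranges to (\ref{rn}).

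For the monotonicity step, I will use the standard first-variation formulas under IMCF, namely $|\Sigma_t|' = |\Sigma_t|$, $\mathrm{Vol}(\Omega_t)' = \int_{\Sigma_t} H^{-1}\,d\Sigma_t$, and $(\int_{\Sigma_t} r\,d\Sigma_t)' = \int_{\Sigma_t}(r + \langle \nabla r, \nu\rangle/H)\,d\Sigma_t$. A direct computation gives
\[
|\Sigma_t|^{n/(n-1)}\, G'(t) = \frac{1}{n-1}\left(n\,\mathrm{Vol}(\Omega_t) - \int_{\Sigma_t} r\,d\Sigma_t\right) - \int_{\Sigma_t}\frac{1 - \langle\nabla r, \nu\rangle}{H}\,d\Sigma_t.
\]
Kwong's inequality (\ref{kwong's inequality}) makes the first parenthesis non-positive, while $|\nabla r|=1$ together with $H>0$ makes the second integral non-negative; hence $G'(t) \leq 0$.

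For the asymptotic step, I will invoke Gerhardt--Urbas convergence to write $\Sigma_t$ as, up to lower-order corrections, a sphere of radius $R_t = (|\Sigma_t|/\omega_{n-1})^{1/(n-1)}$ centered at a bounded point, and plug the expansions $\int_{\Sigma_t} r\,d\Sigma_t = \omega_{n-1}R_t^{n} + O(R_t^{n-2})$ and $\mathrm{Vol}(\Omega_t) = \omega_{n-1}R_t^{n}/n + O(R_t^{n-1})$ into the definition of $G$. For the equality case, once (\ref{rn}) holds with equality the monotonicity forces $G'\equiv 0$ on $[0,\infty)$, so each of the two non-positive contributions above must vanish identically: the first gives equality in Kwong's inequality, and the second forces $\langle \nabla r, \nu\rangle \equiv 1$, i.e.\ the outer normal to be radial; either conclusion identifies $\Sigma$ as a sphere centered at the origin, while the converse is a direct check. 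The hard part is the asymptotic analysis --- one must ensure the lower-order corrections in $\int r\,d\Sigma_t$ and $\mathrm{Vol}(\Omega_t)$ do not conspire to shift the limit of $G(t)$, and this is where the smoothness of the Gerhardt--Urbas convergence (rather than mere $C^0$-convergence) is essential.
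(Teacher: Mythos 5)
Your proposal is correct and follows essentially the same route as the paper: the same scale-invariant quantity $|\Sigma_t|^{-n/(n-1)}\bigl(\int_{\Sigma_t} r\,d\Sigma - \mathrm{Vol}(\Omega_t)\bigr)$, monotonicity obtained from exactly the two ingredients the paper uses (the divergence-theorem inequality $\int_\Sigma r\,d\Sigma \geq n\,\mathrm{Vol}(\Omega)$ and $\langle \nabla r,\nu\rangle\leq 1$ with $H>0$), the Gerhardt--Urbas asymptotics for the limit, and the same rigidity argument via $Q'(0)=0$. The only remark worth making is that for the asymptotic step you do not need the limit of $G$ to equal the spherical value (which would require controlling where the limit sphere is centered): since the quantity is invariant under dilations about the origin, it suffices to evaluate $G$ on the round limit of the rescaled flow and bound it below using $\int_S r\,d\Sigma\geq n\,\mathrm{Vol}$ together with equality in the isoperimetric inequality, which is how the paper sidesteps the expansion you describe as the hard part.
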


The quantity on the left hand side of (\ref{pmi inequality}) is known as the {\em polar moment of inertia}. It is a very important quantity in Newtonian Physics. Our counterexample shows that the origin centered sphere is not a minimizer of the scale-invariant quantity
\begin{equation} \label{scale invariant pmi}
\left(\frac{|\Sigma|}{\omega_{n-1}}\right)^{-\frac{n+1}{n-1}}\int_\Sigma r^2\,d\Sigma
\end{equation}
among the family of strictly convex hypersurfaces, at least when $n=3$. 


A very interesting problem consists of finding the infimum of (\ref{scale invariant pmi}) over 
some family of (possibly nonsmooth) hypersurfaces. We now mention some papers that considered this problem (see each of the mentioned papers for details on the regularity of the family of hypersurfaces considered). For $n=2$, the problem was treated by Sachs in \cite{Sachs1,Sachs2}, where he proved, using geometric methods, that the infimum is achieved if and only if the curve is an origin centered equilateral triangle. An analytic proof of Sach's result was given by Hall in \cite{Hall}. When $n=3$, the problem was considered by Freitas, Laugesen and Liddell in \cite{FLL}, where they showed the existence of a minimizer over 
some suitable family of hypersurfaces.
They also conjectured that the infimum is attained when $\Sigma$ is some truncated tetrahedron.



As a consequence of the next result, which is a corollary of Theorem \ref{ineq_rn}, we have that among the family of star-shaped and strict mean convex hypersurfaces, the infimum of (\ref{scale invariant pmi}) is at least 
$$ 
\left( \frac{n-1}{n} \right)^2 \omega_{n-1}.
$$



\begin{corollary}\label{k=0}
If $\Sigma$ is star-shaped and strictly mean convex, then 
\begin{align} \label{inequality corollary}
\begin{aligned}
\int_\Sigma r^2\,d\Sigma \geq & \left(\frac{n-1}{n}\right)^2\omega_{n-1}\left(\frac{|\Sigma|}{\omega_{n-1}}\right)^{\frac{n+1}{n-1}} \\
&+\frac{2(n-1)}{n}\mbox{\rm Vol}(\Omega)\left(\frac{|\Sigma|}{\omega_{n-1}}\right)^{\frac{1}{n-1}}+\frac{\mbox{\rm Vol}(\Omega)^2}{|\Sigma|}.
\end{aligned}
\end{align}

with equality holding if and only if $\Sigma$ is a sphere centered at the origin.
\end{corollary}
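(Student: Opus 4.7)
The plan is to derive the corollary from Theorem \ref{ineq_rn} by a single application of the Cauchy--Schwarz (equivalently H\"older) inequality, followed by an algebraic expansion of the resulting square.

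First, I would note that by Cauchy--Schwarz,
\begin{equation*}
\left(\int_\Sigma r\,d\Sigma\right)^2 \leq |\Sigma|\int_\Sigma r^2\,d\Sigma,
\end{equation*}
which rearranges to
\begin{equation*}
\int_\Sigma r^2\,d\Sigma \geq \frac{1}{|\Sigma|}\left(\int_\Sigma r\,d\Sigma\right)^2.
\end{equation*}
By Theorem \ref{ineq_rn}, the quantity $\int_\Sigma r\,d\Sigma$ is bounded below by the nonnegative expression
\begin{equation*}
A := \frac{n-1}{n}\omega_{n-1}\left(\frac{|\Sigma|}{\omega_{n-1}}\right)^{\frac{n}{n-1}} + \mathrm{Vol}(\Omega),
\end{equation*}
so $\int_\Sigma r^2\,d\Sigma \geq A^2/|\Sigma|$.

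Next, I would expand $A^2$ as a sum of three terms and distribute the factor $1/|\Sigma|$. The cross term is automatically $\frac{2(n-1)}{n}\mathrm{Vol}(\Omega)(|\Sigma|/\omega_{n-1})^{1/(n-1)}$ after simplifying the exponents (using $n/(n-1) - 1 = 1/(n-1)$), the last term is $\mathrm{Vol}(\Omega)^2/|\Sigma|$, and the first term reduces to $((n-1)/n)^2\omega_{n-1}(|\Sigma|/\omega_{n-1})^{(n+1)/(n-1)}$ using $2n/(n-1) - 1 = (n+1)/(n-1)$. This matches exactly the right-hand side of (\ref{inequality corollary}).

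For the equality case, equality in Cauchy--Schwarz forces $r$ to be constant on $\Sigma$, so $\Sigma$ is a geodesic sphere centered at the origin; conversely, if $\Sigma$ is such a sphere, both the equality in Cauchy--Schwarz and the equality in Theorem \ref{ineq_rn} hold, giving equality in (\ref{inequality corollary}). Since the derivation is a straightforward chain of two inequalities together with routine algebra, there is no substantive obstacle; the only care required is to keep track of the exponents of $|\Sigma|$ and $\omega_{n-1}$ correctly when expanding $A^2/|\Sigma|$.
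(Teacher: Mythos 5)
Your proposal is correct and follows essentially the same route as the paper: apply H\"older/Cauchy--Schwarz to get $\int_\Sigma r^2\,d\Sigma \geq \bigl(\int_\Sigma r\,d\Sigma\bigr)^2/|\Sigma|$, insert the lower bound from Theorem \ref{ineq_rn}, expand the square, and note that equality forces $r$ to be constant. The exponent bookkeeping checks out, so nothing further is needed.
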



Also in \cite{Kwong1}, analogs of (\ref{kwong's inequality}) and (\ref{Kwong-Miao's inequality}) were proved when the ambient space is taken to be another space form. For ambient spaces different from the Euclidean space, we will not deal with versions of (\ref{improved kwong-miao}), only with versions of (\ref{improved kwong}). Let us start by the case when the ambient is the sphere $\mathbb{S}^n$.

Recall that $(0,\pi)\times \mathbb{S}^{n-1}$ endowed with the metric
$$ dr^2 + \sin^2(r)h,$$
where $h$ is the metric of the unit sphere $\mathbb{S}^{n-1}$, gives a model for the round metric on $\mathbb{S}^n$. Here, $r$ is the geodesic distance to some fixed origin $O$.

Let $\Sigma$ be a smooth and strictly mean convex closed orientable hypersurface embedded in $\mathbb{S}^n$. It is proved in \cite{Kwong1} (Corollary 4.5) that if $\Sigma$ is contained in the open hemisphere centered at $O$, then
\begin{equation*}
    \int_\Sigma \sin r \, d\Sigma \geq n \int_\Omega \cos r \, d\Omega,
\end{equation*}
where $\Omega$ is the inner region enclosed by $\Sigma$.

In \cite{makowski-scheuer}, Makowski and Scheuer show that if $\Sigma$ is a strictly convex hypersurface embedded in $\mathbb{S}^n$, then the IMCF starting at $\Sigma$ converges, in finite time, to an equator $E_\Sigma$. This equator determines two hemispheres, with one of them containing $\Sigma$. We associate with each strictly convex $\Sigma \subset \mathbb{S}^n$ a point $x(\Sigma) \in \mathbb{S}^n$ in the following way: we let $x(\Sigma)$ be the center of the hemisphere, determined by $E_\Sigma$, that contains $\Sigma$ (looking at the hemisphere as a geodesic ball). We will refer to the point $x(\Sigma)$ as {\em the point associated to $\Sigma$ via the IMCF}. 

The following theorem is a version of Theorem \ref{ineq_rn} for hypersurfaces in $\mathbb{S}^n$.


\begin{theorem}\label{ineq_sn} Let $\Sigma$ be a smooth, strictly convex, closed orientable hypersurface in $\mathbb{S}^n$. Then
\begin{equation}\label{sn}
\int_\Sigma\sin r_x\geq\int_\Omega\cos r_x +\frac{n-1}{n}\omega_{n-1}\left(\frac{|\Sigma|}{\omega_{n-1}}\right)^{\frac{n}{n-1}},
\end{equation}
where $x$ is the point associated to $\Sigma$ via the IMCF and $r_x$ denotes the geodesic distance to $x$. The equality holds if and only if $\Sigma$ is a geodesic sphere centered at $x$.
\end{theorem}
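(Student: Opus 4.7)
The plan is to mirror the IMCF-based argument available for Theorem~\ref{ineq_rn}, replacing the Euclidean IMCF convergence to expanding round spheres with the Makowski--Scheuer convergence in $\mathbb{S}^n$ to the equator $E_\Sigma$. Let $\{\Sigma_t\}_{t \in [0,T)}$ denote the IMCF starting from $\Sigma$. Throughout the flow the hypersurfaces stay strictly convex; since $\Omega_t$ is monotone and exhausts the hemisphere bounded by $E_\Sigma$, each $\Sigma_t$ lies in the open hemisphere centered at $x$, and therefore Kwong's spherical inequality (Corollary 4.5 of \cite{Kwong1})
$$\int_{\Sigma_t}\sin r_x \geq n\int_{\Omega_t}\cos r_x$$
is available at every $t \in [0,T)$.

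The key step will be to introduce the auxiliary quantity
$$Q(t) = |\Sigma_t|^{-\frac{n}{n-1}}\left( \int_{\Sigma_t} \sin r_x - \int_{\Omega_t}\cos r_x\right)$$
and show it is monotone nonincreasing. Using the standard IMCF evolution formulas $\tfrac{d}{dt}|\Sigma_t| = |\Sigma_t|$, $\tfrac{d}{dt}\int_{\Sigma_t} f = \int_{\Sigma_t}(f + \langle\nabla f,\nu\rangle/H)$ and $\tfrac{d}{dt}\int_{\Omega_t}f = \int_{\Sigma_t} f/H$, together with $\nabla \sin r_x = \cos r_x\,\partial_{r_x}$, a short computation yields
$$Q'(t) = |\Sigma_t|^{-\frac{n}{n-1}}\left[ -\frac{1}{n-1}\left( \int_{\Sigma_t}\sin r_x - n\int_{\Omega_t}\cos r_x\right) + \int_{\Sigma_t}\frac{\cos r_x(\langle \partial_{r_x},\nu\rangle - 1)}{H}\right].$$
The first bracketed term is nonpositive by Kwong's inequality, and the second is nonpositive since $\cos r_x > 0$, $H > 0$ and $\langle \partial_{r_x},\nu\rangle \leq 1$.

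The desired inequality \eqref{sn} then follows from $Q(0) \geq \lim_{t\to T}Q(t)$. As $\Sigma_t$ approaches $E_\Sigma$, the area tends to $\omega_{n-1}$, the integral $\int_{\Sigma_t}\sin r_x$ tends to $\omega_{n-1}$, and $\int_{\Omega_t}\cos r_x$ tends to $\omega_{n-1}\int_0^{\pi/2}\cos r\sin^{n-1}r\, dr = \omega_{n-1}/n$. Substituting these limits gives $\lim_{t\to T}Q(t) = \tfrac{n-1}{n}\omega_{n-1}^{-1/(n-1)}$, and multiplying by $|\Sigma|^{n/(n-1)}$ recovers \eqref{sn}. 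For the rigidity clause, $Q \equiv Q(0)$ forces the integrand in the second summand above to vanish identically, so $\langle \partial_{r_x},\nu\rangle = 1$ on each $\Sigma_t$, meaning every $\Sigma_t$ (and hence $\Sigma$) is a geodesic sphere centered at $x$.

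The main technical hurdle will be justifying that $\Sigma_t$ remains in the open hemisphere centered at $x$ for every $t \in [0,T)$, so that Kwong's inequality and the positivity of $\cos r_x$ are both available. This is precisely why the theorem specifies that $x$ is the point associated to $\Sigma$ via the IMCF: the flow has been calibrated so that its limit equator $E_\Sigma$ bounds exactly the right hemisphere, and the argument would break down for any other choice of reference point.
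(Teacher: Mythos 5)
Your proposal is correct and follows essentially the same route as the paper: the same normalized quantity $Q(t)$, the same monotonicity computation (which the paper packages as Proposition \ref{Key_result}, with your appeal to Kwong's Corollary 4.5 playing the role of the paper's Lemma \ref{key_lemma}), and the same evaluation of the limit at the equator via the Makowski--Scheuer convergence. Your explicit attention to keeping $\Sigma_t$ inside the open hemisphere centered at $x$ (so that $\cos r_x>0$) is a point the paper glosses over, and is a welcome addition rather than a deviation.
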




Now, let $\mathbb{H}^n$ be the ambient space. Consider the following model for $\mathbb{H}^n$: the differentiable manifold $(0,\infty) \times \mathbb{S}^{n-1}$ endowed with the metric
\begin{equation*} 
dr^2 + \sinh^2(r)h,
\end{equation*}
where, as before, $h$ is the metric of the unit sphere $\mathbb{S}^{n-1}$.

It is proved in \cite{Kwong1} that if $\Sigma$ is a smooth and strictly mean convex closed orientable hypersurface embedded in $\mathbb{H}^n$, then
\begin{equation} \label{kwong's hyperbolic}
    \int_\Sigma \sinh(r) \, d\Sigma \geq n \int_\Omega \cosh(r) \, d\Omega,
\end{equation}
where, as before, $\Omega$ is the region enclosed by $\Sigma$. Moreover, the equality holds if and only if $\Sigma$ is a geodesic sphere centered at the origin.

Our analog of Theorem \ref{ineq_rn} will work not only for $\mathbb{H}^n$, but also for a family of ambient spaces, known as the adS-Reissner-Nordstr\"om family.

Let $(S^{n-1}, h_S)$ be a closed space form of sectional curvature $\epsilon \in \lbrace -1, 0, 1 \rbrace$. Let $m, q$ and $\kappa$, with $q < m,\kappa < \infty$ be such that the equation
$$
\epsilon + \kappa^2 s^2 - 2ms^{2-n} + q^2 s^{4-2n} = 0
$$
has positive real roots and let $s_0$ be the largest root of this equation. The adS-Reissner-Nordstr\"om manifold of mass $m$ and charge $q$ is the Riemannian manifold $(P,\gamma)$ defined as follows: $P = (s_0 , \infty) \times S$ and 
$$
\gamma:= \gamma_{m,q,\epsilon,\kappa} = \frac{1}{\epsilon + \kappa^2 s^2 - 2ms^{2-n} + q^2 s^{4-2n}} + s^2 h_S.
$$
The boundary $\partial P = \lbrace s_0 \rbrace \times S$ is referred to as the horizon of $(P,\gamma)$.


A hypersurface $\Sigma$ in $P$ is called star-shaped if $\Sigma$ is the graph of some function defined on the horizon.

It is known that, after a change of variable, the metric $\gamma$ can be written as 
\begin{equation*} 
dr^2 + \lambda^2(r)h_S,
\end{equation*} 
where $\lambda:[0,\infty) \to \mathbb{R}$ satisfies the ODE
\begin{equation} \label{ode}
\lambda'(r) = \sqrt{\epsilon + \kappa^2 \lambda^2 - 2m \lambda^{2-n} + q^2 \lambda^{4-2n}}
\end{equation}
(see \cite{wang-z}, Lemma 9).

Let $f = \lambda'(r)$. The function $f$ satisfies
$$
(\Delta_{\gamma}f)\gamma-\nabla_{\gamma}^2f+f{\rm Ric}_{\gamma}=(n-1)(n-2)q^2\lambda(r)^{4-2n}fh_S,
$$
where ${\rm Ric}_\gamma$, $\Delta_\gamma$ and $\nabla_\gamma^2$ are, respectively, the Ricci tensor, the Laplacian and the Hessian of the adS-Reissner-Nordstr\"om manifold $(P,\gamma)$.

Recall that a Riemannian manifold $(M,g)$ is called {\it sub-static} if 
\begin{equation} \label{sub-static}
(\Delta_gu)g-\nabla_g^2u+u{\rm Ric}_g\geq0
\end{equation}
for some positive function $u$.
Thus, the adS-Reissner-Nordstr\"om manifold is an example of a sub-static manifold. If the equality holds in (\ref{sub-static}), the manifold is said to be {\it static}. 
The Euclidean space and the sphere are examples of static manifolds.

Related to (\ref{kwong's hyperbolic}) we have the following analog of Theorem \ref{ineq_rn}:

\begin{theorem}\label{ineq_ads}
Let $\Sigma$ be a smooth, star-shaped and strictly mean convex hypersurface in the adS-Reissner-Nordstr\"om manifold $(P,\gamma)$ and let $\Omega$ denote the region bounded by $\Sigma$ and the horizon $\partial P$. Then
\begin{align}\label{ads}
\int_\Sigma\lambda\,d\Sigma\geq\int_\Omega\lambda'\,d\Omega+\frac{n-1}{n}\vartheta_{n-1}\left(\frac{|\Sigma|}{\vartheta_{n-1}}\right)^{\frac{n}{n-1}}+\frac{s_0}{n}|\partial P|,   
\end{align} 
where $\vartheta_{n-1} = |S|$. Moreover, equality holds if and only if $\Sigma$ is a slice, that is, $\Sigma = \lbrace s \rbrace \times S$ for some $s \in [s_0,\infty)$.

\end{theorem}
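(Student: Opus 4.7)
The plan is to deform $\Sigma$ by the inverse mean curvature flow (IMCF) $\Sigma_t$ starting at $\Sigma_0 = \Sigma$, introduce a suitably scale-invariant version of the deficit in (\ref{ads}), show it is monotone non-increasing along the flow, and identify its limit at $t\to\infty$ by direct evaluation on slices. The key algebraic input is a Minkowski-type identity coming from the conformal Killing field $X = \lambda\,\partial_r$: since $\mathcal{L}_X \gamma = 2\lambda'\gamma$, one has $\operatorname{div}_\gamma X = n\lambda'$, and applying the divergence theorem on $\Omega_t$ (whose boundary is $\Sigma_t \cup \partial P$, with $\lambda|_{\partial P} = s_0$) gives
\begin{equation*}
\int_{\Sigma_t}\lambda\,u\,d\Sigma_t \;=\; n\int_{\Omega_t}\lambda'\,d\Omega \;+\; s_0|\partial P|,
\end{equation*}
where $u:=\langle\partial_r,\nu\rangle \in (0,1]$ (positivity from preservation of star-shapedness along IMCF, boundedness from $|\partial_r|=|\nu|=1$).

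I then consider
\begin{equation*}
N(t) \;:=\; |\Sigma_t|^{-n/(n-1)}\left( \int_{\Sigma_t}\lambda\,d\Sigma_t - \int_{\Omega_t}\lambda'\,d\Omega - \tfrac{s_0}{n}|\partial P| \right).
\end{equation*}
Using the IMCF evolutions $\tfrac{d}{dt}|\Sigma_t| = |\Sigma_t|$, $\tfrac{d}{dt}\int_{\Sigma_t}\lambda\,d\Sigma_t = \int_{\Sigma_t}(\lambda + \lambda' u/H)\,d\Sigma_t$, and $\tfrac{d}{dt}\int_{\Omega_t}\lambda'\,d\Omega = \int_{\Sigma_t}\lambda'/H\,d\Sigma_t$, and substituting the identity above to replace $n\int_{\Omega_t}\lambda' + s_0|\partial P|$ by $\int_{\Sigma_t}\lambda u$, a short computation yields
\begin{equation*}
N'(t) \;=\; -\,\frac{|\Sigma_t|^{-n/(n-1)}}{n-1}\int_{\Sigma_t}(1-u)\left(\lambda + (n-1)\tfrac{\lambda'}{H}\right) d\Sigma_t \;\le\;0,
\end{equation*}
since every factor in the integrand is non-negative.

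A direct evaluation on a slice $\{s\}\times S$ (with $u\equiv 1$, $|\Sigma|=\vartheta_{n-1}s^{n-1}$, $\int_\Sigma\lambda=\vartheta_{n-1}s^n$, and $\int_\Omega\lambda'\,d\Omega = \vartheta_{n-1}(s^n-s_0^n)/n$) gives the $s$-independent value $N \equiv \tfrac{n-1}{n}\vartheta_{n-1}^{-1/(n-1)}$. Granting long-time existence of IMCF in $(P,\gamma)$ together with the rescaled convergence of $\Sigma_t$ to slices as $t\to\infty$ (in the spirit of Gerhardt and Urbas, refined by Neves, Scheuer, Ding, and Wang-Zhang), one obtains $\lim_{t\to\infty}N(t) = \tfrac{n-1}{n}\vartheta_{n-1}^{-1/(n-1)}$, and the monotonicity then forces $N(0) \geq \tfrac{n-1}{n}\vartheta_{n-1}^{-1/(n-1)}$, which is precisely (\ref{ads}). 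Rigidity follows because $N'\equiv 0$ forces $u\equiv 1$ on each $\Sigma_t$, so $\Sigma$ is a slice. The main obstacle is the asymptotic IMCF analysis in adS-Reissner-Nordstr\"om (existence for all $t$, preservation of star-shapedness and strict mean convexity, rescaled convergence to slices), which must be imported from the literature; the algebraic monotonicity calculation itself is automatic once the conformal Killing divergence identity is in hand.
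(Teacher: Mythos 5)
Your proposal follows essentially the same strategy as the paper: run the IMCF, normalize the deficit by $|\Sigma_t|^{-n/(n-1)}$ (your $N(t)$ is exactly the paper's $Q(t)$ from Proposition \ref{Key_result}), prove monotonicity, and pass to the limit. Where you differ is in the monotonicity computation: the paper obtains $Q'\le 0$ by chaining two separate inequalities --- Cauchy--Schwarz in (\ref{auxliar}) and the Minkowski-type inequality (\ref{warp-ineq}) --- whereas you substitute the exact divergence identity $\int_{\Sigma_t}\lambda u\,d\Sigma = n\int_{\Omega_t}\lambda'\,d\Omega + s_0|\partial P|$ into the derivative and arrive at a closed formula for $N'(t)$ with a manifestly non-positive integrand $-(1-u)\bigl(\lambda+(n-1)\lambda'/H\bigr)$. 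This is a genuine refinement (I checked the algebra; it is correct and makes the rigidity case $u\equiv 1$ immediate), and it is equivalent in content to the paper's two-step estimate. The one place where your argument is weaker than the paper's is the limit $t\to\infty$: you assert $\lim N(t)$ equals the slice value by ``direct evaluation on slices plus rescaled convergence,'' but unlike the Euclidean case the quantity $N$ is not scale-invariant in adS-Reissner-Nordstr\"om (the ambient metric is not homogeneous), so rescaled convergence alone does not let you evaluate the limit on a slice. The paper instead proves only the inequality $\liminf Q \ge \frac{n-1}{n}\vartheta_{n-1}^{-1/(n-1)}$, using the asymptotic expansions (\ref{asymptotic integral lambda}) and (\ref{asymptotic power of area}) from \cite{chen-li-zhou} together with H\"older's inequality on $(S,h_S)$; that is the cleaner way to close your argument, and the inequality form of the limit is all that monotonicity requires.
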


We remark that versions of Theorem \ref{ineq_ads} hold for the Schwarzschild space, the Kottler space, the adS-Schwarzschild space and the hyperbolic space. This can be seen by noticing that each of these spaces is the limit space of some subfamily of the adS-Reissner-Nordstr\"om family. For example, when the ambient is the hyperbolic space, we have the following version of Theorem \ref{ineq_ads}:


\begin{corollary}\label{ineq_hn}
Let $\Sigma$ be a smooth hypersurface in the hyperbolic space $\mathbb{H}^n$. If $\Sigma$ is star-shaped with respect to the origin and strictly mean convex, then
\begin{equation*}
\int_\Sigma \sinh (r) \,d\Sigma \geq \int_\Omega \cosh(r) \,d\Omega+\frac{n-1}{n}\omega_{n-1}\left(\frac{|\Sigma|}{\omega_{n-1}}\right)^{\frac{n}{n-1}}.    
\end{equation*} 
Morevover, equality holds if and only if $\Sigma$ is a geodesic sphere centered at the origin.
\end{corollary}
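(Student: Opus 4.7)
The plan is to derive Corollary \ref{ineq_hn} from Theorem \ref{ineq_ads} by passing to the limit in which the mass $m$ and charge $q$ of the adS-Reissner-Nordstr\"om manifold tend to zero. With $\epsilon = 1$ and $\kappa = 1$, the ODE \eqref{ode} reads $\lambda'(r) = \sqrt{1 + \lambda^2 - 2m\lambda^{2-n} + q^2\lambda^{4-2n}}$; as $(m,q) \to (0,0)$ the largest root $s_0 = s_0(m,q)$ of the radicand tends to $0$, and the metric $\gamma_{m,q}$ converges to $dr^2 + \sinh^2(r)\,h$ on compact subsets of $(0,\infty)\times\mathbb{S}^{n-1}$, which realizes $\mathbb{H}^n$ with the origin deleted. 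Since $\epsilon = 1$, one may take $(S,h_S) = \mathbb{S}^{n-1}$, so $\vartheta_{n-1} = \omega_{n-1}$ and $|\partial P_{m,q}| = s_0^{n-1}\omega_{n-1} \to 0$.

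Given $\Sigma \subset \mathbb{H}^n$ smooth, star-shaped with respect to the origin $O$, and strictly mean convex, I would write $\Sigma$ as a graph $r = \rho(\theta)$ over $\mathbb{S}^{n-1}$ with $\rho_{\min} := \min_\theta \rho(\theta) > 0$. For each $(m,q)$ with $s_0(m,q) < \rho_{\min}/2$, the same graph defines a hypersurface $\Sigma_{m,q}$ in $P_{m,q}$. For $(m,q)$ sufficiently small, $\Sigma_{m,q}$ is star-shaped with respect to the horizon and strictly mean convex in $(P_{m,q},\gamma_{m,q})$, and all the geometric quantities appearing in \eqref{ads}, namely $\int_{\Sigma_{m,q}}\lambda\,d\Sigma_{m,q}$, $\int_{\Omega_{m,q}}\lambda'\,d\Omega_{m,q}$, $|\Sigma_{m,q}|$ and $|\partial P_{m,q}|$, converge to their hyperbolic analogues $\int_\Sigma \sinh(r)\,d\Sigma$, $\int_\Omega \cosh(r)\,d\Omega$, $|\Sigma|$ and $0$, respectively. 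Applying Theorem \ref{ineq_ads} to each $\Sigma_{m,q}$ and letting $(m,q)\to(0,0)$ then yields the desired inequality in $\mathbb{H}^n$.

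For the rigidity statement, a bare limiting argument is insufficient, since the gap in the approximating inequalities could shrink to zero in the limit even for non-spherical $\Sigma$. The plan here is to rerun the IMCF monotonicity used for Theorem \ref{ineq_ads} directly in $\mathbb{H}^n$: because $\mathbb{H}^n$ is static, the sub-static condition \eqref{sub-static} holds as an equality for $f = \cosh r$, so the functional
\begin{equation*}
Q(t) = \int_{\Sigma_t}\sinh(r)\,d\Sigma_t - \int_{\Omega_t}\cosh(r)\,d\Omega_t - \frac{n-1}{n}\omega_{n-1}\left(\frac{|\Sigma_t|}{\omega_{n-1}}\right)^{\frac{n}{n-1}}
\end{equation*}
is monotone along the IMCF starting at $\Sigma$, with equality in the monotonicity forcing $\Sigma_t$ to be a geodesic sphere centered at $O$. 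Combined with the fact that $\lim_{t\to\infty} Q(t) = 0$ along the flow (which converges, after rescaling, to a round geodesic sphere centered at $O$), this gives both the inequality a second time and the rigidity.

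The main obstacle will be verifying that this direct IMCF approach goes through cleanly in $\mathbb{H}^n$ without relying on the horizon as a barrier: one needs long-time existence of the flow starting from a star-shaped strictly mean convex hypersurface, preservation of star-shapedness and strict mean convexity, and smooth asymptotic convergence of the rescaled flow to a geodesic sphere centered at $O$. Each of these is known for the IMCF in $\mathbb{H}^n$ in this setting, so the technical work reduces to checking that the monotonicity computation, carried out in full generality in the proof of Theorem \ref{ineq_ads} with the sub-static inequality, specializes correctly to the static case $m = q = 0$.
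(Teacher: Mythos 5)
Your limiting argument for the inequality is precisely the route the paper takes: the paper merely remarks that $\mathbb{H}^n$ arises as a limit of adS-Reissner-Nordstr\"om manifolds as $m,q\to 0$ (with $\epsilon=\kappa=1$, $S=\mathbb{S}^{n-1}$, $s_0\to 0$) and states the corollary without further argument, so your careful version of that limit is exactly what is intended. You are also right that rigidity does not survive such a limit, and your fix --- rerunning the IMCF monotonicity directly in $\mathbb{H}^n$ --- is the natural one: Lemma \ref{key_lemma} and Proposition \ref{Key_result} are already formulated for warped products $N\times[a,b)$ in which $\lbrace a\rbrace\times N$ may degenerate to a point, so with $\lambda=\sinh r$, $a=0$, $\Gamma=\emptyset$ the machinery applies verbatim, and the equality analysis ($Q\equiv\mathrm{const}$, hence $Q'(0)=0$, hence $\Sigma$ is a slice) closes the rigidity claim just as in the proofs of Theorems \ref{ineq_rn} and \ref{ineq_ads}.

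Two details in your rigidity sketch need repair. First, the functional you display is not the monotone one. Proposition \ref{Key_result} gives monotonicity of
$$Q(t)=|\Sigma_t|^{-\frac{n}{n-1}}\left(\int_{\Sigma_t}\sinh r\,d\Sigma-\int_{\Omega_t}\cosh r\,d\Omega\right),$$
whereas your unnormalized quantity $\tilde Q(t)$ only satisfies $\tilde Q'\le\frac{n}{n-1}\tilde Q$, which permits growth when $\tilde Q>0$; moreover, since $|\Sigma_t|=e^t|\Sigma_0|\to\infty$, there is no reason for $\tilde Q(t)\to 0$ even when the normalized quantity converges. Second, and more substantively, the rescaled IMCF in $\mathbb{H}^n$ does \emph{not} in general converge to a round geodesic sphere: by Gerhardt's analysis of inverse curvature flows in hyperbolic space the limit shape is only conformally round, so you cannot evaluate $\lim_{t\to\infty}Q(t)$ by ``plugging in a sphere'' as in the Euclidean case (where the scale-invariance argument of the proof of Theorem \ref{ineq_rn} is legitimate). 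The required bound $\liminf_{t\to\infty}Q(t)\ge\frac{n-1}{n}\omega_{n-1}^{-\frac{1}{n-1}}$ should instead be obtained exactly as in the proof of Theorem \ref{ineq_ads}: combine (\ref{warp-ineq}) with the asymptotics of $\int_{\Sigma_t}\lambda\,d\Sigma$ and $|\Sigma_t|^{\frac{n}{n-1}}$ and the H\"older inequality $\left(\int_{\mathbb{S}^{n-1}}\lambda^{n-1}\right)^{\frac{n}{n-1}}\le\omega_{n-1}^{\frac{1}{n-1}}\int_{\mathbb{S}^{n-1}}\lambda^{n}$, which needs no roundness of the limit. With these two corrections your argument goes through.
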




One can look at  (\ref{improved kwong-miao}) and (\ref{pmi inequality}) as weighted Alexandrov--Fenchel inequalities. We are now going to explore this point of view.

    
If $\Sigma$ is a convex hypersurface in $\mathbb{R}^n$, then the Alexandrov-Fenchel inequalities say that
     $$
\left( \frac{1}{\omega_{n-1}}\int_{\Sigma} \sigma_k(\lambda)\ \! d\Sigma \right)^{n-k} \geq \left( \frac{1}{\omega_{n-1}}\int_\Sigma \sigma_{k-1}(\lambda) \ \! d\Sigma \right)^{n-k-1},
$$
where $\sigma_k(\lambda)$, $1 \leq k \leq n-1$, is the normalized $k^{\mathrm{th}}$ elementary symmetric function of the principal curvature vector $\lambda = (\lambda_1, \ldots, \lambda_{n-1})$ of $\Sigma$. 
Moreover, the equality holds if and only if $\Sigma$ is a round sphere.

Guan and Li \cite{guan-li} showed that these inequalities (together with the rigidity statement) still hold if $\Sigma$ is only assumed to be star-shaped and $k$-convex (which means that $\sigma_i(\lambda) \geq 0$ for $i =  1, \ldots, k$).

It follows from the Alexandrov-Fenchel inequalities that
\begin{equation} \label{modified_AF}
\int_{\Sigma} \sigma_k(\lambda)\ \! d\Sigma \geq \omega_{n-1} \left( \frac{|\Sigma|}{\omega_{n-1}} \right)^{\frac{n-k-1}{n-1}},
\end{equation}
with the equality occurring if and only if $\Sigma$ is a round sphere. The inequalities (\ref{modified_AF}) were used by Ge, Wang and Wu in the proof of the Penrose inequality for asymptotically flat Euclidean graphs in the context of the Lovelock gravity \cite{GWW}.

In \cite{kwong-and-miao}, Kwong and Miao proved the following:
\begin{theorem*}
Let $k \in \lbrace 2, \ldots, n-1 \rbrace$. If $\Sigma \subset \mathbb{R}^n$ is such that $\sigma_k > 0$ on $\Sigma$, then
$$ \int_{\Sigma} r^2 \sigma_k \ \! d\Sigma \geq \int_{\Sigma} \sigma_{k-2} \ \! d\Sigma.$$
Moreover, the equality holds if and only if $\Sigma$ is a sphere centered at the origin.
\end{theorem*}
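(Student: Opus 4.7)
The plan is to bound $\int_\Sigma \sigma_{k-2}\,d\Sigma$ above by $\int_\Sigma r^2\sigma_k\,d\Sigma$ by chaining the Hsiung--Minkowski identity, the trivial pointwise bound $\langle X,\nu\rangle\le |X|=r$, the Cauchy--Schwarz inequality, and the Alexandrov--Fenchel inequality. Schematically, Hsiung--Minkowski converts a $\sigma_{k-1}$-integral into a $\sigma_k\langle X,\nu\rangle$-integral; the pointwise bound introduces the factor $r$; Cauchy--Schwarz upgrades $r$ to $r^2$; and Alexandrov--Fenchel supplies the integral-geometric comparison between $\int\sigma_{k-1}$ and $\int\sigma_{k-2}$.

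Concretely, I would first invoke Hsiung--Minkowski for any closed hypersurface in $\mathbb{R}^n$,
\[
\int_\Sigma \sigma_{k-1}\,d\Sigma \;=\; \int_\Sigma \sigma_k\,\langle X,\nu\rangle\,d\Sigma \;\le\; \int_\Sigma r\,\sigma_k\,d\Sigma,
\]
then apply Cauchy--Schwarz with weight $\sigma_k$,
\[
\Bigl(\int_\Sigma r\,\sigma_k\,d\Sigma\Bigr)^2 \;\le\; \Bigl(\int_\Sigma \sigma_k\,d\Sigma\Bigr)\Bigl(\int_\Sigma r^2\,\sigma_k\,d\Sigma\Bigr),
\]
and finally use the Alexandrov--Fenchel inequality (in Guan--Li's form for $k$-admissible star-shaped hypersurfaces),
\[
\Bigl(\int_\Sigma \sigma_{k-1}\,d\Sigma\Bigr)^2 \;\ge\; \Bigl(\int_\Sigma \sigma_k\,d\Sigma\Bigr)\Bigl(\int_\Sigma \sigma_{k-2}\,d\Sigma\Bigr).
\]
Combining these gives $\bigl(\int_\Sigma\sigma_k\bigr)\bigl(\int_\Sigma\sigma_{k-2}\bigr) \le \bigl(\int_\Sigma\sigma_k\bigr)\bigl(\int_\Sigma r^2\sigma_k\bigr)$, and dividing by $\int_\Sigma\sigma_k>0$ yields the claimed bound. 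For the rigidity statement, equality in the step $\int_\Sigma\sigma_k\langle X,\nu\rangle\le\int_\Sigma r\sigma_k$ forces $\langle X,\nu\rangle=r$ pointwise, so the position vector is radial at every point of $\Sigma$; since $V(|X|^2)=2\langle V,X\rangle=0$ for any tangent $V$, it follows that $r$ is constant on $\Sigma$ and $\Sigma$ must be a sphere centered at the origin, and the converse is immediate.

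The hard part is the Alexandrov--Fenchel step, since the bare hypothesis ``$\sigma_k>0$'' is not classically sufficient to guarantee it. My reading is that this condition should be interpreted as placing $\Sigma$ in the $k$-th G\r{a}rding cone (so that the relevant Newton tensors are positive semidefinite and Guan--Li's extension of Alexandrov--Fenchel applies, possibly after adding a star-shapedness assumption). Should one wish to avoid invoking Alexandrov--Fenchel altogether, a more intrinsic route would be to combine the Minkowski identity with the pointwise Newton inequality $\sigma_{k-1}^2\ge\sigma_k\sigma_{k-2}$ and a divergence identity for the Newton tensor $T_{k-1}$ acting on $X^T$; this is the direction I would explore if the AF-based chain proved unavailable.
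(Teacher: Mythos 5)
The paper itself does not prove this statement: it is quoted verbatim from Kwong--Miao \cite{kwong-and-miao} as background for Theorem \ref{k=1}, so there is no in-paper proof to compare against, and your proposal must stand on its own. Judged that way, it has a genuine gap at the Alexandrov--Fenchel step. The inequality you need, $\left(\int_\Sigma\sigma_{k-1}\,d\Sigma\right)^2\ge\left(\int_\Sigma\sigma_k\,d\Sigma\right)\left(\int_\Sigma\sigma_{k-2}\,d\Sigma\right)$, is the log-concavity of the quermassintegral sequence, $W_k^2\ge W_{k-1}W_{k+1}$. That is classical for \emph{convex} bodies, but the theorem assumes only $\sigma_k>0$, and your proposed repair --- reading $\sigma_k>0$ as membership in the G\aa rding cone plus star-shapedness so that Guan--Li applies --- does not close the gap: the Guan--Li quermassintegral inequalities (the form quoted in this paper, $\left(\frac{1}{\omega_{n-1}}\int_\Sigma\sigma_k\right)^{n-k}\ge\left(\frac{1}{\omega_{n-1}}\int_\Sigma\sigma_{k-1}\right)^{n-k-1}$) bound $\int_\Sigma\sigma_k$ from \emph{below} in terms of $\int_\Sigma\sigma_{k-1}$, whereas log-concavity requires the \emph{upper} bound $\int_\Sigma\sigma_k\le\left(\int_\Sigma\sigma_{k-1}\right)^2/\int_\Sigma\sigma_{k-2}$; no chaining of two-term lower bounds of that type can produce a three-term upper bound. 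As written, your argument therefore proves the theorem only for convex $\Sigma$, a strictly smaller class than the one in the statement.

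The remainder of the chain is sound: Hsiung--Minkowski is an identity valid for any closed hypersurface, the pointwise bound $\langle X,\nu\rangle\le r$ weighted by $\sigma_k>0$ is correct, Cauchy--Schwarz with weight $\sigma_k$ is legitimate, and your rigidity argument works once the chain closes, since equality of the two endpoint quantities forces equality in every intermediate step, in particular $X^T=0$ and hence $r$ constant. (Two small points you should make explicit: to square $\int_\Sigma\sigma_{k-1}\le\int_\Sigma r\sigma_k$ you need $\int_\Sigma\sigma_{k-1}>0$, which holds because $\sigma_k>0$ on a closed connected hypersurface forces the principal curvature vector into the cone $\Gamma_k$ by the elliptic-point/connectedness argument, whence $\sigma_{k-1}>0$ pointwise.) The ``more intrinsic route'' you sketch in your last sentence --- the pointwise Newton--MacLaurin inequality $\sigma_{k-1}^2\ge\sigma_k\sigma_{k-2}$ combined with the extended Hsiung--Minkowski/divergence identities for the Newton tensor $T_{k-1}$, which are exactly the tools of Kwong's paper \cite{Kwong1} --- is the viable path under the bare hypothesis $\sigma_k>0$ and is essentially how the cited source proceeds; the AF-based chain should be downgraded to a proof in the convex case only.
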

As a corollary, we get that for $k \in \lbrace 2, \ldots, n-1 \rbrace$,
\begin{equation} \label{Kwong-Miao}
\int_{\Sigma} r^2 \sigma_k \ \! d\Sigma \geq \omega_{n-1} \left( \frac{|\Sigma|}{\omega_{n-1}} \right)^{\frac{n-k+1}{n-1}},
\end{equation}
with the equality holding if and only if $\Sigma$ is an origin centered sphere.

Inequalities (\ref{Kwong-Miao}) can be seen as weighted versions of inequalities (\ref{modified_AF}). One can then ask if (\ref{Kwong-Miao}) remains true for $k \in \lbrace 0,1 \rbrace$. Notice that, the $k=1$ and $k=0$ cases of (\ref{Kwong-Miao}) are exactly the inequalities (\ref{improved kwong-miao}) and (\ref{pmi inequality}), respectively.

Using (\ref{Kwong-Miao's inequality}) and the IMCF, we were able to show (\ref{improved kwong-miao}) when $\Sigma$ is star-shaped and strictly mean convex.
\begin{theorem}\label{k=1}
If $\Sigma$ is star-shaped and strictly mean convex, then
\begin{equation} \label{improved kwong-miao again}
 \int_\Sigma r^2 \sigma_1 \ \! d\Sigma \geq \omega_{n-1} \left( \frac{|\Sigma|}{\omega_{n-1}} \right)^{\frac{n}{n-1}},
 \end{equation}
with the equality holding if and only if $\Sigma$ is a sphere centered at the origin.
\end{theorem}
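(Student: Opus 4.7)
The plan is to evolve $\Sigma$ by the inverse mean curvature flow $\{\Sigma_t\}_{t\geq 0}$ with $\Sigma_0=\Sigma$. By the theorems of Gerhardt and Urbas this flow exists for all $t\geq 0$, preserves star-shapedness and strict mean convexity, and after rescaling by $e^{-t/(n-1)}$ converges smoothly to a round sphere. Introduce the scale-invariant ratio
$$
Q(t)=\frac{\int_{\Sigma_t} r^2\sigma_1\,d\Sigma_t}{\omega_{n-1}\bigl(|\Sigma_t|/\omega_{n-1}\bigr)^{n/(n-1)}}.
$$
For a round sphere of radius $R$ centered at a point $p\in\mathbb{R}^n$, a direct calculation gives $Q=1+|p|^2/R^2$, so the asymptotic roundness of the rescaled flow together with $R(t)\to\infty$ gives $\lim_{t\to\infty}Q(t)=1$. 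Since $|\Sigma_t|=|\Sigma|e^{t}$, the desired inequality $Q(0)\geq 1$ will follow once we establish that $Q$ is non-increasing along the flow, equivalently
$$
\frac{d}{dt}\int_{\Sigma_t} r^2\sigma_1\,d\Sigma_t\,\leq\,\frac{n}{n-1}\int_{\Sigma_t} r^2\sigma_1\,d\Sigma_t.
$$

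To obtain this bound I first compute the left-hand side. Using the standard IMCF evolutions $\partial_t(r^2)=2\langle X,\nu\rangle/H$, $\partial_t H=-\Delta_{\Sigma_t}(1/H)-|A|^2/H$ and $\partial_t\,d\Sigma_t=d\Sigma_t$, together with the identity $\Delta_{\Sigma_t}(r^2)=2(n-1)-2H\langle X,\nu\rangle$ and the divergence identity $\int_{\Sigma_t}\langle X,\nu\rangle\,d\Sigma_t=n\,\mathrm{Vol}(\Omega_t)$, a boundary-free integration by parts yields
$$
\frac{d}{dt}\int_{\Sigma_t} r^2\sigma_1\,d\Sigma_t=\int_{\Sigma_t} r^2\sigma_1\,d\Sigma_t+\frac{4n\,\mathrm{Vol}(\Omega_t)}{n-1}-2\int_{\Sigma_t}\frac{d\Sigma_t}{H}-\frac{1}{n-1}\int_{\Sigma_t}\frac{r^2|A|^2}{H}\,d\Sigma_t.
$$

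The monotonicity then reduces to showing
$$
4n\,\mathrm{Vol}(\Omega_t) \leq \int_{\Sigma_t} r^2\sigma_1\,d\Sigma_t + 2(n-1)\int_{\Sigma_t} \frac{d\Sigma_t}{H} + \int_{\Sigma_t} \frac{r^2|A|^2}{H}\,d\Sigma_t,
$$
and this falls out of three slice-wise estimates applied to $\Sigma_t$: Newton's inequality $|A|^2\geq H^2/(n-1)$, which yields $\int_{\Sigma_t} r^2|A|^2/H \geq \int_{\Sigma_t} r^2\sigma_1$; the Heintze-Karcher/Ros inequality $(n-1)\int_{\Sigma_t} d\Sigma_t/H\geq n\,\mathrm{Vol}(\Omega_t)$, valid for every embedded mean-convex hypersurface; and the Kwong-Miao inequality (\ref{Kwong-Miao's inequality}) applied to the evolving slice, $\int_{\Sigma_t} r^2\sigma_1\geq n\,\mathrm{Vol}(\Omega_t)$. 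Feeding Newton into the last summand and then pairing Heintze-Karcher with Kwong-Miao twice assembles the three terms into the required $4n\,\mathrm{Vol}(\Omega_t)$ bound.

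For rigidity, equality at $t=0$ gives $Q(0)=1=\lim_{t\to\infty}Q(t)$; the established monotonicity then forces $Q\equiv 1$, so each of the three estimates must be an equality on every $\Sigma_t$. Equality in Newton's inequality makes every $\Sigma_t$ totally umbilic and hence a round sphere, and equality in Kwong-Miao then pins the center at the origin, so $\Sigma$ is a sphere centered at $O$. The main obstacle I expect is the evolution-equation computation and its integration-by-parts bookkeeping, in particular making sure the dissipative terms line up so that three off-the-shelf inequalities suffice; the required ingredients (Newton, Heintze-Karcher/Ros, Kwong-Miao, Gerhardt-Urbas asymptotics) are all classical.
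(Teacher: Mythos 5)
Your argument is correct and follows essentially the same route as the paper: the same scale-invariant monotone quantity along the IMCF, the same limit computation via the Gerhardt--Urbas convergence and scale invariance, and the same rigidity argument via forcing the monotone quantity to be constant. The only difference is that you re-derive the Kwong--Miao differential inequality (\ref{derivation}) from scratch (your evolution computation plus Newton's inequality and Heintze--Karcher/Ros reproduces it exactly), whereas the paper cites it directly from \cite{kwong-miao}; your reduction of the monotonicity to $4n\,\mathrm{Vol}(\Omega_t)\leq \int_{\Sigma_t} r^2\sigma_1\,d\Sigma_t + 2(n-1)\int_{\Sigma_t} H^{-1}\,d\Sigma_t + \int_{\Sigma_t} r^2|A|^2H^{-1}\,d\Sigma_t$ and the three slice-wise estimates all check out.
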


\section{A monotone quantity along the IMCF on warped product manifolds}

Let $(N^{n-1},g_N)$ be a closed, orientable and connected Riemannian manifold. Let $a<b$ be positive real  numbers. We consider the product manifold $\overline{M}=N\times[a,b)$ equipped with the Riemannian metric
\begin{equation*} 
\overline{g}=dr^2+\lambda^2(r)g_N,
\end{equation*}
where $\lambda:[a,b)\to\mathbb{R}$ is a smooth function which is positive on $(a,b)$.
We allow the case in which $\lbrace a \rbrace \times N$ degenerates to a point, provided $\overline{M}$ is a smooth manifold and the restriction of $\overline{g}$ to $(a,b) \times N$ extends to a smooth Riemannian metric on $\overline{M}$.


Let $\Sigma$ be a closed, orientable and connected hypersurface embedded in $\overline{M}$. As observed in \cite{brendle}, $\overline{M}\setminus\Sigma$ has exactly two connected components, with exactly one of them contained in $N\times(a,b-\delta)$ for some $\delta>0$. We call this component the {\em inner region} and denote it by $\Omega$.
We either have $\partial\Omega=\Sigma$ or $\partial\Omega=\Sigma\cup(N\times\{a\})$. To simplify the notation, in the former case we let $\Gamma = \emptyset$, and in the later we let $\Gamma = \lbrace a \rbrace \times N$. Hence, no matter the case, we have $\partial \Omega = \Sigma \cup \Gamma$. We let $\nu$ be the outward-pointing unit normal to $\Sigma$. Also, whenever $\Gamma \not= \emptyset$, we let $\eta$ be the outward-pointing unit normal to $\Gamma$.

The following lemma is a generalization of (\ref{kwong's inequality}).

\begin{lemma}\label{key_lemma}
It holds
\begin{equation}\label{warp-ineq}
\int_\Sigma\lambda\,d\Sigma\geq n\int_\Omega \lambda'\,d\Omega+\lambda(a)|\Gamma|,
\end{equation}
with the equality occurring if and only if $\Sigma$ is a slice $\lbrace r \rbrace \times N$, for some $r \in [a,b)$.

 \begin{proof}
 We consider the vector field $Y=\lambda\partial_r$ in $\overline{M}$. Denoting by $\overline{\mbox{div}}$  the divergence with respect to $(\overline{M}, \overline{g})$, it is straightforward to verify that
 $$\overline{\mbox{div}}Y=n\lambda',$$
 where $\lambda'=dr(\lambda)$. Thus, by the divergence theorem, we have
 \begin{equation*}
 \begin{aligned}
n\int_{\Omega}\lambda'\,d\Omega&=\int_\Omega\overline{\mbox{div}}\ Y\,d\Omega\\
&=\int_\Sigma\langle Y,\nu\rangle\,d\Sigma+\int_{\Gamma}\langle Y,\eta\rangle\,d\Gamma\\
&=\int_\Sigma\lambda\langle\partial_r,\nu\rangle\,d\Sigma+\int_{\Gamma}\lambda\langle\partial_r,\eta\rangle\,d\Gamma\\
&=\int_\Sigma\lambda\langle\partial_r,\nu\rangle\,d\Sigma-\lambda(a)|\Gamma|\\
&\leq\int_\Sigma\lambda\,d\Sigma-\lambda(a)|\Gamma|,
\end{aligned}    
 \end{equation*}
with the equality holding if and only if $\langle \partial_r, \nu \rangle \equiv 1$ along $\Sigma$, which happens if and only if $\Sigma$ is a slice $\lbrace r \rbrace \times N$.

 \end{proof}
\end{lemma}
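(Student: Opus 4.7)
The plan is to apply the divergence theorem to a carefully chosen radial vector field. Specifically, I would consider $Y = \lambda(r)\partial_r$ on $\overline{M}$ and first verify that $\overline{\mbox{div}}\,Y = n\lambda'$. This follows from the general identity $\overline{\mbox{div}}(f\partial_r) = \partial_r f + f\,\overline{\mbox{div}}(\partial_r)$ together with $\overline{\mbox{div}}(\partial_r) = (n-1)\lambda'/\lambda$, which is just $(n-1)$ times the mean curvature of the slices $\{r\}\times N$ in the warped product $(\overline{M},\overline{g})$.

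With that computation in hand, the next step is to apply the divergence theorem to $\Omega$, whose boundary decomposes as $\partial\Omega = \Sigma \cup \Gamma$:
$$n\int_\Omega \lambda'\,d\Omega = \int_\Sigma \langle Y,\nu\rangle\,d\Sigma + \int_\Gamma \langle Y,\eta\rangle\,d\Gamma.$$
On $\Gamma = \{a\}\times N$ (when nonempty), the outward-pointing normal with respect to $\Omega$ is $\eta = -\partial_r$, so the $\Gamma$-integrand contributes $-\lambda(a)|\Gamma|$. On $\Sigma$, since $\partial_r$ is a unit vector and $\lambda>0$, one has $\langle Y,\nu\rangle = \lambda\langle\partial_r,\nu\rangle \leq \lambda$. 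Combining these gives
$$n\int_\Omega \lambda'\,d\Omega \leq \int_\Sigma \lambda\,d\Sigma - \lambda(a)|\Gamma|,$$
which is exactly \eqref{warp-ineq}.

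For the rigidity statement, equality in the Cauchy--Schwarz-type bound $\langle\partial_r,\nu\rangle\leq 1$ forces $\nu = \partial_r$ pointwise along $\Sigma$. Since $\Sigma$ is connected and closed, this means $r$ is constant on $\Sigma$, so $\Sigma$ is a slice $\{r\}\times N$. Conversely, slices trivially produce equality. There is no serious obstacle: the only point requiring a bit of care is the identification of the outward normal on $\Gamma$ and the degenerate case in which $\{a\}\times N$ collapses to a point, in which case $\Gamma=\emptyset$ and the term $\lambda(a)|\Gamma|$ simply vanishes, consistent with the convention introduced just before the lemma.
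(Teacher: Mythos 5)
Your proposal is correct and follows essentially the same route as the paper: the divergence theorem applied to $Y=\lambda\partial_r$ over $\Omega$, with the boundary term on $\Gamma$ computed via $\eta=-\partial_r$ and the Cauchy--Schwarz bound $\langle\partial_r,\nu\rangle\leq 1$ on $\Sigma$ giving both the inequality and the rigidity. The only difference is that you spell out the divergence computation and the connectedness argument for the equality case in slightly more detail than the paper does.
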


Let $\Sigma_0$ be a strictly mean convex hypersurface in $\overline{M}$ which is given by an embedding
$$
x_0: \Sigma \to \overline{M}.
$$

We consider an one-parameter family of embeddings
$$x:[0,T^*)\times \Sigma \to \overline{M},$$
which satisfy the flow equation
\begin{align}\label{imcf}
\begin{aligned}
    \frac{\partial x}{\partial t}=\frac{\nu}{H}\\ 
    x(0,\cdot)=x_0,
\end{aligned}
\end{align}
where, as before, $\nu$ is the outward-pointing unit normal vector to the hypersurface $\Sigma_t = x(t,\cdot)$ and $H = (n-1)\sigma_1$ is the mean curvature of $\Sigma_t$ with respect to this choice of unit normal. If no confusion arises, we denote the envolving hypersurface simply by $\Sigma$. 
The flow (\ref{imcf}) is the famous inverse mean curvature flow (IMCF).

\begin{proposition}
Under the IMCF, the following evolution equations hold:
\begin{itemize}
\item[(i)] The area element $d\Sigma$ evolves as
\begin{equation} \label{evolution area element}
    \frac{\partial}{\partial t} d\Sigma = d\Sigma;
\end{equation}
\item[] in particular, the area $|\Sigma|$ evolves as
\begin{equation} \label{evolution area}
    \frac{d}{dt} |\Sigma| = |\Sigma|.
\end{equation}
\item[(ii)] For any $u \in C^{\infty}(M)$, the quantity $$\int_\Omega u \, d\Omega$$ evolves as
\begin{equation} \label{deriv}
    \frac{d}{dt} \left( \int_{\Omega} u \, d\Omega \right) = \int_\Sigma \frac{u}{H} \, d\Sigma.
\end{equation}
\end{itemize}
\end{proposition}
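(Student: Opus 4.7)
The plan is to derive both items from the standard formulas describing a family of hypersurfaces moving under a normal velocity $F\nu$, specialized to $F=1/H$.

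For (i), I would recall the first variation of the induced metric: under $\partial_t x = F\nu$, one has $\partial_t g_{ij} = 2F h_{ij}$ in local coordinates $u^1,\dots,u^{n-1}$ on $\Sigma$, where $h_{ij}$ is the second fundamental form taken with respect to the outward normal $\nu$. Taking the trace and using $g^{ij}h_{ij}=H$ gives $\partial_t \log\sqrt{\det g_{ij}} = FH$, so that under any normal flow
\[
\frac{\partial}{\partial t} d\Sigma = F H \, d\Sigma.
\]
Substituting $F = 1/H$ yields $\partial_t d\Sigma = d\Sigma$, and integrating over $\Sigma$ gives the area evolution $\tfrac{d}{dt}|\Sigma| = |\Sigma|$.

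For (ii), I would apply the general transport formula for an integral over a time-dependent region with moving boundary. The inner region $\Omega_t$ has boundary $\Sigma_t \cup \Gamma$, with $\Gamma$ stationary and $\Sigma_t$ moving with outward normal speed $\langle \partial_t x, \nu\rangle = 1/H$. The transport identity then gives
\[
\frac{d}{dt}\int_{\Omega_t} u \, d\Omega = \int_{\Sigma_t} u\, \langle \partial_t x, \nu \rangle \, d\Sigma = \int_{\Sigma_t} \frac{u}{H} \, d\Sigma,
\]
with no contribution from the stationary piece $\Gamma$. This identity can be justified either through the coarea formula applied to a function whose level sets foliate the swept region $\Omega_t \setminus \Omega_0$ by the $\Sigma_t$, or via a change of variables that extends $x(t,\cdot)$ to a diffeomorphism of the ambient bulk and pulls back $u\,d\Omega$ to a $t$-independent domain.

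The only mildly technical point is making this transport formula rigorous: one must verify that the fixed boundary piece $\Gamma$ does not contribute and that $\Omega_t \setminus \Omega_0$ is smoothly foliated by the $\Sigma_t$ on $[0,T^*)$. Both ingredients are standard IMCF bookkeeping (as in Huisken--Ilmanen or Gerhardt) and require nothing beyond the flow equation itself.
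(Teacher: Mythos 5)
Your proposal is correct and follows essentially the same route as the paper, which simply cites the standard first-variation formula (via Huisken) for (i) and invokes the coarea formula for (ii) -- exactly the two arguments you spell out in detail. No discrepancies to report.
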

\begin{proof}
Equations (\ref{evolution area element}) and (\ref{evolution area}) are well know (see, for example, \cite{huisken}). Equation (\ref{deriv}) follows from the co-area formula.
\end{proof}
\begin{proposition}\label{Key_result}
Let $\Sigma_0$ evolve by the IMCF. 
If $t \in [0,T^\ast)$ is such that $\lambda'(t) > 0$ and $\Sigma_{t}$ is strictly mean convex, then the quantity

\begin{align}
\label{monotone_quantity}
\begin{aligned}
    Q(t)= |\Sigma_t|^{-\frac{n}{n-1}} & \left(   \int_{\Sigma_t}\lambda\,d\Sigma-\int_{\Omega_t}\lambda'\,d\Omega  -\frac{\lambda(a)}{n}|\partial \overline{M}|\right)
\end{aligned}
\end{align}
satisfies $Q'(t) \leq 0$. Moreover, $Q'(t)=0$ if and only if $\Sigma_{t}$ is a slice $\lbrace r \rbrace \times N$.

\end{proposition}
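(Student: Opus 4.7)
The plan is to differentiate $Q(t)$ directly, express the result in terms of quantities controlled by Lemma~\ref{key_lemma}, and show that the residual term has a sign by Cauchy--Schwarz. Write $A(t)=|\Sigma_t|$ and
$$
I(t)=\int_{\Sigma_t}\lambda\,d\Sigma-\int_{\Omega_t}\lambda'\,d\Omega-\frac{\lambda(a)}{n}|\partial\overline{M}|,
$$
so that $Q(t)=A(t)^{-n/(n-1)}I(t)$. Since (\ref{evolution area}) gives $A'(t)=A(t)$, the product rule produces
$$
Q'(t)=A(t)^{-n/(n-1)}\Bigl(I'(t)-\tfrac{n}{n-1}I(t)\Bigr),
$$
so it suffices to show that the bracketed factor is non-positive.

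To compute $I'(t)$, I would use (\ref{deriv}) for the bulk term and combine (\ref{evolution area element}) with $\partial_t\lambda=\langle\overline{\nabla}\lambda,\nu\rangle/H=\lambda'\langle\partial_r,\nu\rangle/H$ for the boundary term, obtaining
$$
I'(t)=\int_{\Sigma_t}\lambda\,d\Sigma+\int_{\Sigma_t}\frac{\lambda'\bigl(\langle\partial_r,\nu\rangle-1\bigr)}{H}\,d\Sigma.
$$
A short algebraic rearrangement then gives
\begin{align*}
I'(t)-\tfrac{n}{n-1}I(t)
=-\tfrac{1}{n-1}\Bigl(\int_{\Sigma_t}\lambda\,d\Sigma-n\int_{\Omega_t}\lambda'\,d\Omega-\lambda(a)|\partial\overline{M}|\Bigr)+\int_{\Sigma_t}\frac{\lambda'(\langle\partial_r,\nu\rangle-1)}{H}\,d\Sigma.
\end{align*}
The first term is $\leq 0$ by Lemma~\ref{key_lemma} applied to $(\Sigma_t,\Omega_t)$, noting that $|\Gamma|=|\partial\overline{M}|$ in both the boundary and degenerate cases. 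The second term is $\leq 0$ because $\lambda'>0$ by hypothesis, $H>0$ by strict mean convexity, and $\langle\partial_r,\nu\rangle\leq 1$ since both $\partial_r$ and $\nu$ are unit vectors.

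For the equality case, $Q'(t)=0$ forces equality in both inequalities above. From the pointwise Cauchy--Schwarz bound $\langle\partial_r,\nu\rangle\leq 1$, equality at every point of $\Sigma_t$ means $\nu=\partial_r$ on $\Sigma_t$, which by the standard characterization forces $\Sigma_t$ to be a slice $\{r\}\times N$; and on any such slice, Lemma~\ref{key_lemma} is already an equality, so the converse holds as well. The main obstacle is essentially bookkeeping: one must make sure that the coefficients in the combination $-\tfrac{1}{n-1}\int_\Sigma\lambda\,d\Sigma+\tfrac{n}{n-1}\int_\Omega\lambda'\,d\Omega+\tfrac{\lambda(a)}{n-1}|\partial\overline{M}|$ align precisely with the constants in Lemma~\ref{key_lemma}, which dictates the specific normalizations $A^{-n/(n-1)}$ and $\lambda(a)|\partial\overline{M}|/n$ appearing in the definition of $Q$.
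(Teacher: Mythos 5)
Your proof is correct and follows essentially the same route as the paper's: differentiate $Q$ via the evolution equations, control the term $\int_{\Sigma_t}\lambda'\langle\partial_r,\nu\rangle/H\,d\Sigma$ by the Cauchy--Schwarz bound $\langle\partial_r,\nu\rangle\le 1$ (using $\lambda'>0$ and $H>0$), and invoke Lemma~\ref{key_lemma} to close the estimate, with the equality case reduced to $\nu\equiv\partial_r$ forcing a slice. The only difference is cosmetic: you isolate the two non-positive deficit terms explicitly, whereas the paper chains the corresponding inequalities.
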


\begin{proof}
Denote by $D$ the Levi-Civita connection of $(\overline{M},\overline{g})$. We have
\begin{equation}\label{auxliar}
\begin{aligned}
\left(\int_{\Sigma_t}\lambda\,d\Sigma\right)'&=\int_{\Sigma_t}\frac{\partial\lambda}{\partial t}\,d\Sigma+\int_{\Sigma_t}\lambda\,d\Sigma\\
&=\int_{\Sigma_t}\lambda'\left\langle Dr,\frac{\partial x}{\partial t}\right\rangle\,d\Sigma+\int_{\Sigma_t}\lambda\,d\Sigma\\
&=\int_{\Sigma_t}\frac{\lambda'}{H}\langle Dr,\nu\rangle\,d\Sigma+\int_{\Sigma_t}\lambda\,d\Sigma\\
&\leq\int_{\Sigma_t}\frac{\lambda'}{H}\,d\Sigma+\int_{\Sigma_t}\lambda\,d\Sigma\\
&=\left(\int_{\Omega_t}\lambda'\,d\Omega\right)'+\int_{\Sigma_t}\lambda\,d\Sigma,
\end{aligned}
\end{equation}
where he have used the Cauchy-Schwarz inequality
and (\ref{deriv}) with $u=\lambda'$. 
It follows from (\ref{warp-ineq})  and (\ref{auxliar}) that
\begin{equation*}
\begin{aligned}
\left(\int_{\Sigma_t}\lambda\,d\Sigma -\int_{\Omega_t}\lambda'\,d\Omega \ - \right. & \left.  \frac{\lambda(a)}{n}|\partial \overline{M}|\right)'  \\
&\leq\int_{\Sigma_t}\lambda\,d\Sigma\\
&=\frac{n}{n-1}\left(\int_\Sigma\lambda\,d\Sigma-\frac{1}{n}\int_\Sigma\lambda\,d\Sigma\right)\\
&\leq\frac{n}{n-1}\left(\int_\Sigma\lambda\,d\Sigma-\int_{\Omega}\lambda'\,d\Sigma-\frac{\lambda(a)}{n}|\partial \overline{M}|\right).
\end{aligned}
\end{equation*}
Also, from (\ref{evolution area}) we have $|\Sigma_t|'=|\Sigma_t|$. 
Thus, we conclude that $Q'(t)\leq0$. If $Q'(t) = 0$, then the equality holds in (\ref{warp-ineq}), which implies that $\Sigma_t$ is a slice $\lbrace r \rbrace \times N$. Also, one easily checks that if $\Sigma_t$ is a slice $\lbrace r \rbrace \times N$, then $Q'(t)=0$, since the equality holds in each of the inequalities.
\end{proof}

\section{Proof of the theorems}
Throughout this section, we let $\lbrace \Sigma_t \rbrace$ be the family of hypersurfaces obtained from the IMCF starting at $\Sigma$.
\subsection{The Euclidean space as the ambient space 
}

We will consider the following model for  $\mathbb{R}^n$: the differentiable manifold $(0,\infty)\times\mathbb{S}^{n-1}$ with the metric
$$\overline{g}=dr^2+r^2h.$$

The IMCF starting with a star-shaped and strictly mean convex hypersurface in $\mathbb{R}^n$ was treated by Gerhardt in \cite{Gerhardt} and by Urbas in \cite{Urbas}.
\begin{theorem}[\cite{Gerhardt} and \cite{Urbas}]\label{IMCF_in_rn}
 Let $\Sigma $ be a smooth, closed hypersurface in $ \mathbb{R}^{n}$ with positive mean curvature, given by
 a smooth embedding $x_0 : \mathbb{S}^{n-1} \rightarrow \mathbb{R}^{n} $.
 Suppose $ \Sigma$ is star-shaped with respect to a  point $ P $.
  Then the initial value problem
 \begin{equation*}
 \left\{
 \begin{aligned}
 \frac{\partial x}{\partial t}  = & \ \frac{1}{ H } \nu  \\
 x  (\cdot, 0) = & x_0 ( \cdot)
 \end{aligned}
 \right.
 \end{equation*}
 has a unique smooth solution $ x : \mathbb{S}^{n-1} \times [0, \infty) \rightarrow \mathbb{R}^{n} $, where
 $ \nu $ is the unit outer normal vector  to $\Sigma_t = x (\mathbb{S}^{n-1}, t) $  and
 $H$ is the mean curvature of $\Sigma_t$.
 Moreover,  $\Sigma_t$ is  star-shaped with respect to  $P$  and
 the rescaled hypersurface  $ \widetilde{\Sigma_t }$, parametrized by $ \widetilde{x}(\cdot , t) =
  e^{- \frac{t}{n-1} } x( \cdot, t)   $, converges to a sphere  centered at $P$ in the $ \mathcal{C}^\infty$ topology
 as $ t \rightarrow \infty$.
\end{theorem}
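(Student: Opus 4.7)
The plan is to reduce the geometric evolution to a scalar quasilinear parabolic PDE and establish long-time a priori estimates that force the rescaled flow to converge to a round sphere. Set up polar coordinates about $P$, so that a hypersurface star-shaped with respect to $P$ is a radial graph $x(\xi,t)=\rho(\xi,t)\xi$ over $\mathbb{S}^{n-1}$ with $\rho>0$. Writing out $\nu$ and $H$ in terms of $\rho,\nabla\rho,\nabla^2\rho$ (with the sphere metric), the IMCF system reduces to
\begin{equation*}
\frac{\partial \rho}{\partial t} \;=\; \frac{v}{H(\rho,\nabla\rho,\nabla^2\rho)}, \qquad v=\sqrt{1+\rho^{-2}|\nabla\rho|^2},
\end{equation*}
a strictly parabolic scalar equation on $\mathbb{S}^{n-1}$ as long as $H>0$. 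Short-time existence and uniqueness then follow from standard linear parabolic theory on a closed manifold. It is also convenient to work with $\varphi=\log\rho$, which removes the scaling ambiguity and linearizes the expected asymptotics.

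Next I would prove the standard a priori estimates that Gerhardt and Urbas isolate. For the $C^0$ bound, compare $\rho$ with flows by concentric spheres $\rho_\pm(t)=R_\pm e^{t/(n-1)}$ using the parabolic maximum principle; this gives $R_- e^{t/(n-1)}\le \rho(\cdot,t)\le R_+ e^{t/(n-1)}$. For the $C^1$ bound (preservation of star-shapedness), compute the evolution of the gradient function $v$ (equivalently of $|\nabla\varphi|^2$) and show, via maximum principle applied at a spatial max, that $v\le \max_{\Sigma_0} v$, so the flow stays a graph. For the $C^2$ bound (preservation of mean convexity), differentiate the IMCF equation to derive the evolution of $H$,
\begin{equation*}
\partial_t H \;=\; \frac{\Delta_{\Sigma_t} H}{H^2}-\frac{2|\nabla H|^2}{H^3}-\frac{|A|^2}{H},
\end{equation*}
and run the maximum principle on $H$ (or on a suitable test quantity like $H/v$ or $H\rho$) to get a uniform positive lower bound. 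Combined with Krylov--Safonov, Schauder, and bootstrapping, these estimates yield uniform $C^{k,\alpha}$ bounds for the rescaled unknown $\tilde{\varphi}=\varphi-t/(n-1)$, so short-time existence iterates to all $t\ge 0$. The step I expect to be the main obstacle is obtaining the sharp lower bound on $H$ uniformly in time, since naive test quantities deteriorate under rescaling and one must choose a scale-invariant auxiliary function.

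For convergence, set $\tilde{x}(\xi,t)=e^{-t/(n-1)}x(\xi,t)$ with radial function $\tilde\rho=e^{-t/(n-1)}\rho$. The preceding estimates applied to $\tilde{\varphi}=\log\tilde\rho$ give uniform bounds in $C^{k}(\mathbb{S}^{n-1})$ for every $k$. One then computes the evolution of the oscillation or of $|\nabla\tilde\varphi|^2$ and proves an exponential decay $|\nabla\tilde\varphi|(t)\le Ce^{-\alpha t}$ for some $\alpha>0$, either via a direct parabolic maximum principle argument on the evolution equation of $|\nabla\tilde\varphi|^2$ or by extracting a subsequential limit (which must be a sphere by the rigidity in the monotonicity of $|\Sigma_t|^{-1/(n-1)}\cdot(\text{Willmore-type integral})$) and showing uniqueness of the limit. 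Together with the uniform $C^k$ bounds and interpolation, this upgrades the gradient decay to decay of all derivatives, yielding convergence of $\tilde\Sigma_t$ to a round sphere centered at $P$ in $C^\infty$.
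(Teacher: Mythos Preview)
The paper does not prove this theorem at all; it is quoted from Gerhardt \cite{Gerhardt} and Urbas \cite{Urbas} and used as a black box in the proof of Theorem~\ref{ineq_rn}. There is therefore nothing in the present paper to compare your proposal against. That said, your sketch is a faithful outline of the original Gerhardt--Urbas argument: rewriting the flow as a scalar quasilinear parabolic equation for the radial function (or $\varphi=\log\rho$) over $\mathbb{S}^{n-1}$, obtaining $C^0$, gradient, and curvature bounds by maximum-principle comparisons, bootstrapping to higher regularity, and then showing the rescaled graph converges to a constant. As a proof it remains a sketch---the delicate points you flag (the scale-invariant lower bound on $H$ and the decay of $|\nabla\tilde\varphi|$) are exactly where the work lies in the cited papers---but the strategy is the correct one.
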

\begin{proof}[Proof of Theorem \ref{ineq_rn}]
We consider $\lambda=r$ in (\ref{monotone_quantity}). In this case, since $\lambda(0)=0$, we obtain that the quantity
$$Q(t)=|\Sigma_t|^{-\frac{n}{n-1}}\left(\int_{\Sigma_t}r\,d\Sigma-\mbox{\rm Vol}(\Omega_t)\right)$$
is monotone nonincreasing. The next step is to show that
\begin{equation}\label{limit_in_rn}
\lim_{t\to\infty}Q(t)\geq\frac{n-1}{n}\left(\frac{1}{\omega_{n-1}}\right)^{\frac{1}{n-1}}.
\end{equation}
Since the inequality we want to show is scale invariant and, by Theorem \ref{IMCF_in_rn}, the rescaled IMCF converges to a sphere, in order to show (\ref{limit_in_rn}) we just need to show that the inequality holds if $\Sigma$ is a sphere. This follows from (\ref{warp-ineq}) and the fact that the equality holds in the isoperimetric inequality. Thus, $$Q(0)\geq\frac{n-1}{n}\left(\frac{1}{\omega_{n-1}}\right)^{\frac{1}{n-1}},$$ which is just a rewriting of ($\ref{rn}$).

If $\Sigma$ is an origin centered sphere, a straightforward computation shows that the equality holds in (\ref{rn}).

If the equality holds in (\ref{rn}), then $$Q(0)=\frac{n-1}{n}\left(\frac{1}{\omega_{n-1}}\right)^{\frac{1}{n-1}}.$$ Applying (\ref{rn}) to $\Sigma_t$ we find, on one hand, that 
$$Q(t)\geq\frac{n-1}{n}\left(\frac{1}{\omega_{n-1}}\right)^{\frac{1}{n-1}},$$
for all $t$. On the other hand, from the monotonicity of $Q(t)$, we find that $$Q(t) \leq Q(0) = \frac{n-1}{n}\left(\frac{1}{\omega_{n-1}}\right)^{\frac{1}{n-1}},$$ for all $t$. Thus, 
we obtain
$$Q(t)=\frac{n-1}{n}\left(\frac{1}{\omega_{n-1}}\right)^{\frac{1}{n-1}},\quad \forall t.$$
In particular, $Q'(0) = 0$, which, by Proposition \ref{Key_result}, happens if and only if $\Sigma$ is a slice, which in this case means an origin centered sphere.

\end{proof}

\begin{proof}[Proof of Corollary \ref{k=0}.]

Holder's inequality gives
\begin{equation} \label{holder2}
\left(\int_{\Sigma}r\,d\Sigma\right)^2\leq\left(\int_\Sigma r^2\,d\Sigma\right)|\Sigma|,
\end{equation}
with the equality occurring if and only if $r$ is constant, that is, if and only if $\Sigma$ is a origin centered sphere. Combining (\ref{holder2}) and (\ref{rn}) we find
\begin{align*}
\int_\Sigma r^2\,d\Sigma\geq\frac{1}{|\Sigma|}\left(\frac{n-1}{n}\omega_{n-1}\left(\frac{|\Sigma|}{\omega_{n-1}}\right)^{\frac{n}{n-1}}+\mbox{\rm Vol}(\Omega)\right)^2,
\end{align*}
which is just a rewriting of (\ref{inequality corollary}).

If $\Sigma$ is a origin centered sphere, then it is straightforward to verify that the equality holds in (\ref{inequality corollary}).

If the equality holds in (\ref{inequality corollary}), then it also holds in (\ref{holder2}), which implies that $\Sigma$ is a origin centered sphere.

\end{proof}

\begin{proof}[Proof of Theorem \ref{k=1}]
In \cite{kwong-miao}, using the IMCF, Kwong and Miao obtained the following inequality:
\begin{equation}\label{derivation}
    \frac{d}{dt}\left(\int_{\Sigma_t}r^2H\,d\Sigma\right)\leq 2n\mbox{Vol}(\Omega_t)+\frac{n-2}{n-1}\int_{\Sigma_t}r^2H\,d\Sigma,
\end{equation}
where $H=(n-1)\sigma_1$. This inequality is crucial in the proof of (\ref{Kwong-Miao's inequality}).

Consider the quantity
\begin{equation} \label{E}
\mathcal{E}(\Sigma)=
|\Sigma|^{-\frac{n}{n-1}}\int_{\Sigma} r^2H\,d\Sigma.
\end{equation}
The function $E$ defined by 
$$E(t) = \mathcal{E}(\Sigma_t)$$
satisfies
$$E'(t)\leq0, \ \forall t.$$
In fact,
\begin{eqnarray*}
E'(t)&=&\frac{1}{|\Sigma|^{\frac{2n}{n-1}}}\left[
 \frac{d}{dt}\left(\int_{\Sigma_t}r^2H\,d\Sigma\right)|\Sigma|^{\frac{n}{n-1}}-\left(\int_{\Sigma_t}r^2H\,d\Sigma\right)\frac{n}{n-1}|\Sigma|^{\frac{n}{n-1}}
\right]\\
&=&\frac{1}{|\Sigma|^{\frac{n}{n-1}}}\left[
\frac{d}{dt}\left(\int_{\Sigma_t}r^2H\,d\Sigma\right)-\frac{n}{n-1}\int_{\Sigma_t}r^2H\,d\Sigma
\right]\\
&\leq&\frac{1}{|\Sigma|^{\frac{n}{n-1}}}
\left(
2n\mbox{Vol}(\Omega_t)+\frac{n-2}{n-1}\int_{\Sigma_t}r^2H\,d\Sigma-\frac{n}{n-1}\int_{\Sigma_t}r^2H\,d\Sigma
\right)\\
&=&\frac{2}{|\Sigma|^{\frac{n}{n-1}}}
\left(
n\mbox{Vol}(\Omega_t)-\frac{1}{n-1}\int_{\Sigma_t}r^2H\,d\Sigma
\right)\\
&\leq&0,
\end{eqnarray*}
where we have used (\ref{derivation}) to get the first inequality sign and (\ref{Kwong-Miao's inequality}) to get the second one. 
Moreover, $E(t)$ is constant if and only if the equality holds in (\ref{derivation}) and (\ref{Kwong-Miao's inequality}) for all $t$, which occurs if and only if $\Sigma_t$ is an origin centered geodesic sphere for all $t$.

Notice that, on a round sphere, the value of $\mathcal{E}$ is at least
$$
\frac{n-1}{(\omega_{n-1})^{\frac{1}{n-1}}}.
$$
This follows from (\ref{Kwong-Miao's inequality}) and the fact that, on a round sphere, the equality holds in the isoperimetric inequality.

Now, using the scale invariance of (\ref{E}) and that the normalized IMCF converges to a round sphere, we find
$$\lim_{t\to\infty}E(t)\geq\frac{n-1}{(\omega_{n-1})^{\frac{1}{n-1}}}.$$
Since $E(t)$ is monotone nonincreasing, we obtain $E(0)\geq E(t)$ for all $t$. Hence,
$$
E(0) \geq \frac{n-1}{(\omega_{n-1})^{\frac{1}{n-1}}},
$$
which is just a rewriting of (\ref{improved kwong-miao again}).

If $\Sigma$ is a origin centered sphere, it is straightforward to verify that the equality holds in (\ref{improved kwong-miao again}).

Suppose the equality holds in (\ref{improved kwong-miao again}), that is, suppose $E(0) = 0$. On one hand, applying (\ref{improved kwong-miao again}) to $\Sigma_t$, we find
$$
E(t) \geq \frac{n-1}{(\omega_{n-1})^{\frac{1}{n-1}}},
$$
for all $t$. On the other hand, using the monotonicity of $E$, we find
$$
E(t) \leq E(0) = \frac{n-1}{(\omega_{n-1})^{\frac{1}{n-1}}},
$$
for all $t$. Therefore,
$$
E(t) = \frac{n-1}{(\omega_{n-1})^{\frac{1}{n-1}}},
$$
for all $t$. Thus, $E(t)$ is constant and, as explained above, this implies that each $\Sigma_t$ is an origin centered sphere, for each $t$. In particular, $\Sigma = \Sigma_0$ is an origin centered sphere. 

\end{proof}

\subsection{The Sphere as the ambient space 
}

\begin{proof}[Proof of Theorem \ref{ineq_sn}] 

Without loss of generality, we can assume $x(\Sigma)$ is the origin, since this can always be achieved by applying to $\Sigma$ an isometry of $\mathbb{S}^n$. In this case, inequality (\ref{sn}) takes the following form:

\begin{equation}\label{sn_balanced}
\int_\Sigma\sin r\geq\int_\Omega\cos r +\frac{n-1}{n}\omega_{n-1}\left(\frac{|\Sigma|}{\omega_{n-1}}\right)^{\frac{n}{n-1}}.
\end{equation}

Consider $\lambda=\sin r $ in (\ref{monotone_quantity}). As in the Euclidean case, $\lambda(0)=0$. Thus, we have that
$$Q(t)=|\Sigma_t|^{-\frac{n}{n-1}}\left(\int_{\Sigma_t}\sin r\,d\Sigma-\int_{\Omega_t}\cos r\,d\Omega\right)$$
is monotone nonincreasing.

It is proved in \cite{makowski-scheuer} that the IMCF is smooth on an interval $[0,T^*)$, with $\Sigma_t$ converging to an equator $E_\Sigma$, as $t\to T^*$.  The next step is to show that
$$\lim_{t\to T^*}Q(t)=\frac{n-1}{n}\left(\frac{1}{\omega_{n-1}}\right)^{\frac{1}{n-1}}.$$
Indeed, since $\Sigma_t$ converges to an equator, we have
$$\lim_{t\to T^*}|\Sigma_t|=\omega_{n-1},\quad
\lim_{t\to T^*}\int_{\Sigma_t}\sin r\,d\Sigma=\omega_{n-1},\quad \lim_{t\to T^*}\int_{\Omega_t}\cos r\,d\Omega=\frac{\omega_{n-1}}{n}.$$
Thus, 
\begin{align*}
\lim_{t\to T^*}Q(t)&=(\omega_{n-1})^{-\frac{n}{n-1}}\left(\omega_{n-1}-\frac{\omega_{n-1}}{n}\right)=\frac{n-1}{n}\left(\frac{1}{\omega_{n-1}}\right)^{\frac{1}{n-1}}.
\end{align*}
From the monotonicity of $Q(t)$, we have
$$Q(0)\geq Q(t),$$
for all $t\in[0,T^*)$. Appling the limit as $t \to T^\ast$ we obtain
$$Q(0)\geq\frac{n-1}{n}\left(\frac{1}{\omega_{n-1}}\right)^{\frac{1}{n-1}},$$
which is a rewriting of (\ref{sn_balanced}).

If $\Sigma$ is an origin centered geodesic sphere, it is straightforward to verify the equality in (\ref{sn_balanced}).

If the equality holds in (\ref{sn_balanced}), then $Q(0)=\frac{n-1}{n}\left(\frac{1}{\omega_{n-1}}\right)^{\frac{1}{n-1}}$.
Applying (\ref{sn_balanced}) to $\Sigma_t$ we find, on one hand, that $Q(t) \geq \frac{n-1}{n}\left(\frac{1}{\omega_{n-1}}\right)^{\frac{1}{n-1}}$, for all $t \in [0,T^\ast)$. On the other hand, from the monotonicity of $Q(t)$, we find that $Q(t) \leq Q(0) = \frac{n-1}{n}\left(\frac{1}{\omega_{n-1}}\right)^{\frac{1}{n-1}}$, for all $t \in [0,T^\ast)$. Thus, 
we obtain
$$Q(t)=\frac{n-1}{n}\left(\frac{1}{\omega_{n-1}}\right)^{\frac{1}{n-1}},\quad \forall t \in [0,T^\ast).$$
In particular, $Q'(0) = 0$, which, by Proposition \ref{Key_result}, happens if and only if $\Sigma$ is a slice, which in this case means an origin centered geodesic sphere.

\end{proof}


\subsection{The adS-Reissner-Nordstr\"om as the ambient space 
} 
Let $\Sigma$ be a mean convex star-shaped hypersurface in $P$. It was proved in \cite{wang-z} and more recently in \cite{chen-li-zhou} that the solution of the inverse mean curvature flow is smooth and defined on $[0,\infty)$.

The following lemma describes the asymptotic behaviour of several geometric quantities.

\begin{lemma}
Let $g$ be the induced metric on $\Sigma$. The following asymptotic behaviour occurs:
\begin{align}
   \lambda &= O(e^{\frac{1}{n-1}t}), \label{asymptotic lambda} \\
    \sqrt{{\rm det}g} &=\lambda^{n-1}\sqrt{{\rm det}h_S}\left(1+O(e^{-\frac{2}{n-1}t}), \right) \label{asymptotic det g} \\
    |\Sigma| &=\left(\int_{S}\lambda^{n-1}dS\right)\left(1+O\left(e^{-\frac{2}{n-1}t}\right)\right), \label{asymptotic area} \\
    \int_\Sigma\lambda\,d\Sigma &=\left(\int_{S}\lambda^n dS\right)\left(1+O\left(e^{-\frac{2}{n-1}t}\right)\right) \ \ \text{and} \label{asymptotic integral lambda} \\
    |\Sigma|^{\frac{n}{n-1}} &=\left(\int_{S}\lambda^{n-1}dS\right)^{\frac{n}{n-1}}\left(1+O\left(e^{-\frac{2}{n-1}t}\right)\right). \label{asymptotic power of area}
    \end{align}
\end{lemma}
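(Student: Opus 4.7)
The plan is to exploit the fact that $\Sigma_t$ remains star-shaped along the IMCF and hence can be written as a radial graph over the horizon $S$. Writing $\Sigma_t=\{(r(\theta,t),\theta):\theta\in S\}$ for a smooth positive function $r$, the entire asymptotic analysis reduces to controlling $r$ and its spatial gradient. The fundamental analytic input I will invoke, from the long-time existence/convergence theorems of \cite{wang-z} and \cite{chen-li-zhou}, is the quantitative gradient decay
\begin{equation}\label{grad_decay}
\frac{|\nabla_{h_S} r|_{h_S}}{\lambda(r)} = O\!\left(e^{-\frac{t}{n-1}}\right),
\end{equation}
which expresses the fact that $\Sigma_t$ approaches a slice at a definite rate.

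With \eqref{grad_decay} in hand, I would establish \eqref{asymptotic det g} by a direct computation: in local coordinates $\theta^i$ on $S$, the induced metric on the graph is
\[
g_{ij}=\lambda(r)^2(h_S)_{ij}+r_i r_j,
\]
and the matrix determinant lemma gives
\[
\det g=\lambda^{2(n-1)}\det h_S\Bigl(1+\lambda^{-2}|\nabla_{h_S} r|_{h_S}^2\Bigr).
\]
Taking the square root and combining \eqref{grad_decay} with the Taylor expansion $\sqrt{1+x}=1+O(x)$ yields \eqref{asymptotic det g}. From there the three integral statements are bookkeeping: integrating $d\Sigma=\sqrt{\det g/\det h_S}\,dS$ over $S$ and pulling the uniform error term outside the integral gives \eqref{asymptotic area}; inserting the extra factor $\lambda$ (a function of $r$ alone) gives \eqref{asymptotic integral lambda}; and raising \eqref{asymptotic area} to the power $n/(n-1)$ while applying $(1+x)^{n/(n-1)}=1+O(x)$ gives \eqref{asymptotic power of area}.

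Finally, \eqref{asymptotic lambda} can be read off by combining the IMCF area identity $|\Sigma_t|=|\Sigma_0|e^t$ (which is \eqref{evolution area}) with the already-established \eqref{asymptotic area} and the $C^0$ near-uniformity of $\lambda(r(\cdot,t))$ on $\Sigma_t$, itself another consequence of \eqref{grad_decay}. Indeed, these together force $\lambda^{n-1}$ to grow like $e^t$ in the supremum norm along $\Sigma_t$, yielding $\lambda=O(e^{t/(n-1)})$.

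The main obstacle is the gradient estimate \eqref{grad_decay}; everything else is a determinant identity plus Taylor expansion. I would not reprove \eqref{grad_decay} here but cite it from \cite{wang-z,chen-li-zhou}, where it is obtained via parabolic maximum-principle arguments applied to the scalar IMCF equation satisfied by the radial graph function $r(\theta,t)$, together with the $C^1$ convergence of the rescaled flow to a slice.
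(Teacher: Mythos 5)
Your overall strategy coincides with the paper's: the hard analytic content is outsourced to \cite{wang-z} and \cite{chen-li-zhou}, and the five statements are then obtained by elementary integration and Taylor expansion. The paper simply quotes (\ref{asymptotic lambda}) and (\ref{asymptotic det g}) as Lemma 3.1 and Lemma 4.1 of \cite{chen-li-zhou}, then integrates (\ref{asymptotic det g}) once against $1$ and once against $\lambda$, and applies $(1+\alpha)^{n/(n-1)}=1+O(\alpha)$ --- exactly your bookkeeping steps. Your derivation of (\ref{asymptotic det g}) from the graph representation $g_{ij}=\lambda^2 (h_S)_{ij}+r_ir_j$, the matrix determinant lemma, and the gradient decay $|\nabla_{h_S}r|_{h_S}/\lambda=O(e^{-t/(n-1)})$ is a correct unpacking of the cited Lemma 4.1, so that part is fine and somewhat more informative than the paper.

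The one step that does not close is your proposed proof of (\ref{asymptotic lambda}). You want to deduce it from $|\Sigma_t|=|\Sigma_0|e^t$, from (\ref{asymptotic area}), and from the ``$C^0$ near-uniformity of $\lambda$ on $\Sigma_t$,'' claiming the latter is a consequence of the gradient decay. But the gradient decay only controls $|\nabla_{h_S}r|_{h_S}/\lambda$; since $\nabla_{h_S}\log\lambda(r)=(\lambda'/\lambda)\nabla_{h_S}r$ and $\lambda'\sim\kappa\lambda$, the resulting bound on the oscillation of $\log\lambda$ over $S$ is of order $\kappa\,(\sup_{\Sigma_t}\lambda)\,e^{-t/(n-1)}$, which is only $O(1)$ (let alone $o(1)$) once you already know $\sup_{\Sigma_t}\lambda=O(e^{t/(n-1)})$ --- the very statement being proved. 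The area identity by itself yields only the one-sided bound $\lambda_{\min}^{n-1}\le|\Sigma_0|e^t/\vartheta_{n-1}$, since $\det g\ge\lambda^{2(n-1)}\det h_S$. So, as sketched, the step is circular. The fix is immediate: the two-sided estimate $c_1e^{t/(n-1)}\le\lambda\le c_2e^{t/(n-1)}$ is itself one of the a priori $C^0$ estimates obtained by the maximum principle in \cite{wang-z} and \cite{chen-li-zhou} (Lemma 3.1 of the latter, which is what the paper cites), and it should be quoted directly rather than re-derived from the area growth.
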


\begin{proof}
Identities (\ref{asymptotic lambda}) and (\ref{asymptotic det g}) are proved in \cite{chen-li-zhou} (Lemma 3.1 and Lemma 4.1, respectively). To get (\ref{asymptotic area}), just integrate (\ref{asymptotic det g}). To get (\ref{asymptotic integral lambda}), multiply (\ref{asymptotic det g}) by $\lambda$ and integrate it.

It remains to show (\ref{asymptotic power of area}). Denote by $A$ the quantity 
$$
\frac{|\Sigma|}{\displaystyle\int_{S}\lambda^{n-1}dS.}
$$

From (\ref{asymptotic area}) we have
$
A = 1 + \alpha,
$
with 
$
\alpha = O(e^{-\frac{2}{n-1}t}).
$

Now, consider the function $f(x) = x^{\frac{n}{n-1}}$. Since $f$ is differentiable, we have
$$
f(1 + \alpha) - f(1) - f'(1)\cdot \alpha = o(\alpha).
$$
Hence,
$$
A^{\frac{n}{n-1}} = 1 + O(e^{-\frac{2}{n-1}t}).
$$
Thus, we get (\ref{asymptotic power of area}).
\end{proof}



\begin{proof}[Proof of Theorem \ref{ineq_ads}] Consider $\lambda$ defined by (\ref{ode}). We have $\lambda(r(s_0))=s_0$ and $\lambda'(r(s_0))=0$. Thus, (\ref{monotone_quantity}) given by
$$
 Q(t)=|\Sigma_t|^{-\frac{n}{n-1}}\left(\int_{\Sigma_t}\lambda\,d\Sigma-\int_{\Omega_t}\lambda'\,d\Omega-\frac{s_0}{n}|\partial P|\right)
$$
is monotone nonincreasing. We will show that
\begin{equation}\label{limit}
\lim_{t\to\infty}Q(t)\geq\frac{n-1}{n}\left(\frac{1}{\vartheta_{n-1}}\right)^{\frac{1}{n-1}}.
\end{equation}
By (\ref{warp-ineq}) we have
$$\int_\Sigma\lambda\,d\Sigma-\int_\Omega\lambda'\,d\Omega-\frac{s_0}{n}|\partial P|\geq\frac{n-1}{n}\int_\Sigma\lambda\,d\Sigma.$$
Thus, to show (\ref{limit}), it is enough to show that
\begin{equation} \label{liminf}
\lim\inf \frac{\displaystyle\int_\Sigma\lambda\,d\Sigma}{|\Sigma_t|^{\frac{n}{n-1}}}\geq\left(\frac{1}{\vartheta_{n-1}}\right)^{\frac{1}{n-1}}.
\end{equation}
From (\ref{asymptotic integral lambda}) and (\ref{asymptotic power of area}) we find
\begin{align*}
\lim\inf \frac{\displaystyle\int_\Sigma\lambda\,d\Sigma}{|\Sigma_t|^{\frac{n}{n-1}}}=\lim\inf\frac{\int_{S}\lambda^{n}dS}{\left(\int_{S}\lambda^{n-1}dS\right)^{\frac{n}{n-1}}}.
\end{align*}
But Holder's inequality gives
\begin{equation*}
(\vartheta_{n-1})^{\frac{1}{n-1}}\int_{S} \lambda^n dS \geq \left( \int_{S} \lambda^{n-1} dS \right)^{\frac{n}{n-1}},
\end{equation*}
which implies (\ref{liminf}).
This proves inequality (\ref{ads}).

If $\Sigma = \lbrace s \rbrace \times S$, for some $s \in [s_0,\infty)$, a straightforward computation shows that the equality holds in (\ref{ads}).

If the equality holds in (\ref{ads}), then $$Q(0)=\frac{n-1}{n}\left(\frac{1}{\vartheta_{n-1}}\right)^{\frac{1}{n-1}}.$$ Applying (\ref{ads}) to $\Sigma_t$, we find, on one hand, that $$Q(t) \geq \frac{n-1}{n}\left(\frac{1}{\vartheta_{n-1}}\right)^{\frac{1}{n-1}},$$ for all $t$. On the other hand, from the monotonicity of $Q(t)$, we find that $$Q(t) \leq Q(0) = \frac{n-1}{n}\left(\frac{1}{\vartheta_{n-1}}\right)^{\frac{1}{n-1}},$$ for all $t$. Thus, we obtain
$$
Q(t) = \frac{n-1}{n}\left(\frac{1}{\vartheta_{n-1}}\right)^{\frac{1}{n-1}}, \ \forall t.
$$
In particular, $Q'(0) = 0$, which, by Proposition \ref{Key_result}, happens if and only if $\Sigma$ is a slice.


\end{proof}
\section{A surface with small polar moment of inertia} \label{section_counterexample}

The purpose of this section is to construct a star-shaped and strictly mean convex surface $\Sigma$ in $\mathbb{R}^3$ for which
$$
|\Sigma|^{-2}\int_\Sigma r^2\,d\Sigma<\frac{1}{\omega_2},
$$
given a conterexample to (\ref{improved kwong}) when $n=3$.

Our construction is inspired on examples of surfaces of constant width due to Fillmore \cite{Fillmore}. Our example is obtained by rotating the curve

$$
\left\{
\begin{array}{rcl}
x&=&(\cos3t+9)\sin t-3\sin3t\cos t\\
y&=&(\cos3t+9)\cos t+3\sin3t\sin t
\end{array}\right.,\quad0\leq t\leq2\pi.
$$
around the $y$-axis.

\begin{figure}
\begin{minipage}{0.4\textwidth}
\includegraphics[scale=.4]{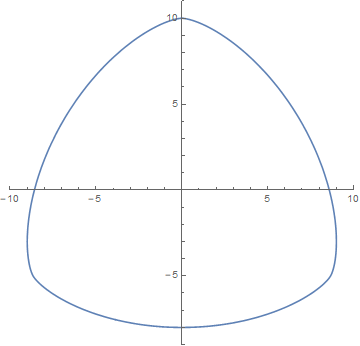}
\end{minipage}%
\begin{minipage}{0.6\textwidth}
\includegraphics[scale=.4]{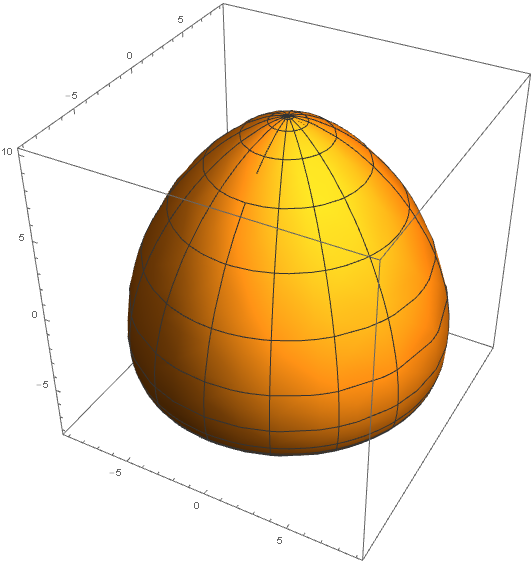}
\end{minipage}
\caption{Constant width curve and surface generated by its rotation around the $y$-axis}
\end{figure}

Such surface is analytic and described by
$$
\left\{
\begin{array}{rcl}
x&=&[(\cos3 t+9)\sin t-3\sin3 t\cos t]\cos s\\
y&=&[(\cos3 t+9)\sin t-3\sin3t\cos t]\sin s\\
z&=&(\cos3t+9)\cos t+3\sin3t\sin t.
\end{array}
\right.,\ 0\leq t\leq\pi\ \mbox{and}\ 0\leq s\leq2\pi.
$$

After some computations we obtain
$$
|\Sigma|^2=\frac{122855056\pi^2}{1225}$$
and
$$\int_\Sigma r^2\,d\Sigma=\frac{124744936\pi}{5005}.$$
Thus,
$$|\Sigma|^{-2}\int_\Sigma r^2\,d\Sigma=\frac{545759095}{2196034126 \pi }<\frac{1}{4\pi}=\frac{1}{\omega_2}.$$ 
Denoting by $\kappa_1$ and $\kappa_2$ the principal curvatures of the previous surface, we have
$$\kappa_1=\frac{1}{\sqrt{(9-8 \cos (3 t))^2}}\quad{\rm and}\quad\kappa_2=\frac{8 \cos (3 t)-9}{\left(8 \cos ^3(t)-9\right) \sqrt{(9-8 \cos (3 t))^2}}.$$
It is not hard to see that
$$\frac{1}{17}\leq \kappa_1,\kappa_2\leq 1.$$
Thus, $\Sigma$ is strictly convex.




\end{document}